\newcommand{\sdi}{{\mathbf{d}}}
\newcommand{\I}{\textbf{I}}
\newcommand{\seg}{\widehat{\textbf{g}}}
\newcommand{\srg}{\textbf{g}}
\newcommand{\beps}{{\boldsymbol{\epsilon}}}
\newcommand{\sVtori}{{\tilde{\textbf{v}}}}
\newcommand{\sVtorip}{{\tilde{\textbf{v}}}^{-\frac{1}{2}}}
\newcommand{\y}{{\textbf{y}}}
\newcommand{\z}{\textbf{z}}
\newcommand{\wi}{\textbf{v}}
\newcommand{\m}{\textbf{m}}
\newcommand{\bxi}{\boldsymbol{\xi}}
\newcommand{\btheta}{{\boldsymbol{\theta}}}
\newcommand{\balpha}{{\boldsymbol{\alpha}}}
\newcommand{\x}{{\mathbf{x}}}
\newcommand{\E}{{\mathbb{E}}}
\newcommand{\ADAM}{{\textsc{ADAM}$^3$}}
\newcommand{\SADAM}{{\textsc{Adam}}}
\newcommand{\SSADAM}{{\textsc{S-Adam}}}
\def\BState{\State\hskip-\ALG@thistlm}
\newcommand{\R}{{\mathbb{R}}}
\providecommand{\keywords}[1]
{
  \small	
  \textbf{\textit{Keywords---}} #1
}
\newtheorem{lemma}{Lemma}
\newtheorem{assum}{Assumption}
\newtheorem{deff}{Definition}
\newtheorem{rmk}{Remark}
\newtheorem{thm}{Theorem}
\newtheorem{cor}{Corollary}
\newcolumntype{L}{>{\varwidth[c]{\linewidth}}l<{\endvarwidth}}
\newcolumntype{M}{>{$}l<{$}}
\def\x{{\mathbf x}}
\title{Solving a Class of Non-Convex Min-Max Games using Adaptive Momentum Methods\thanks{This arXiv submission includes the details of the proofs for the paper accepted for publication in the proceeding of the $46^{th}$ International Conference on Acoustics, Speech, and Signal Processing (ICASSP).}}
\newcommand*{\email}[1]{\texttt{#1}}
\author{
$^{\star}$Babak Barazandeh
\\
\email{bbarazandeh@splunk.com}
\and 
$^{\dagger}$Davoud Ataee Tarzanagh
\\
\email{tarzanagh@ufl.edu}
\and
$^{\dagger}$George Michailidis
\\
\email{gmichail@ufl.edu}
}
\date{
$^{\star}$Splunk, $^{\dagger}$University of Florida
}
\begin{document}
%\ninept
%
\maketitle

\begin{abstract}
Adaptive momentum methods have recently attracted a lot of attention for training of deep neural networks. They use an exponential moving average of past gradients of the objective function to update both search directions and learning rates. However, these methods are not suited for solving min-max optimization problems that arise in training  generative adversarial networks. In this paper, we propose an adaptive momentum min-max algorithm that generalizes adaptive momentum methods to the non-convex min-max regime. Further, we establish  non-asymptotic rates of convergence for the proposed algorithm when used in a reasonably broad class of non-convex min-max optimization problems. Experimental results illustrate its superior performance vis-a-vis benchmark methods for solving such problems.
\end{abstract}
\keywords{
Non-convex min-max games, First-order Nash equilibrium, Adaptive optimization}

\section{Introduction}
Stochastic first-order methods are of core practical importance for solving numerous optimization problems including training deep neural networks (DNN). Standard stochastic gradient descent (SGD) has become a widely used technique for the latter task. However, its convergence crucially depends on the tuning and update of the learning rate over iterations in order to control the variance of the gradient in the stochastic search directions, especially for non-convex functions \cite{bottou2018optimization}.

To alleviate these issues, several improved variants of SGD that automatically update the search directions and learning rates using a metric constructed from the history of iterates have been proposed, including adaptive methods \cite{jacobs1988increased,becker1988improving,duchi2011adaptive,mcmahan2010adaptive} and adaptive momentum methods \cite{kingma2014adam,reddi2018convergence}. In particular, \textsc{Adam} belonging to the second category enjoys the dual advantages of variance adaption and momentum direction \cite{nesterov1983method,polyak1964some} and hence represents a popular algorithm to train DNNs.

There is a large body of literature on the theoretical and empirical benefits of adaptive momentum optimization algorithms for convex \cite{kingma2014adam,reddi2018convergence}, smooth non-convex \cite{chen2018convergence,zaheer2018adaptive,nazari2019dadam}, and non-smooth non-convex settings \cite{nazari2020adaptive}. \cite{liu2019towards} gives an analysis of an optimistic adaptive method that uses~\textsc{Adagrad}~\cite{duchi2011adaptive,mcmahan2010adaptive} for non-convex min-max optimization. However,~\textsc{Adagrad}-type methods are suited for sparse convex settings and their performance deteriorates in (dense) non-convex optimization problems~\cite{zaheer2018adaptive}. These empirical findings necessitate the use of adaptive momentum methods that incorporate knowledge of past iterations. It is important to notice that all these methods are designed for classical minimization problems. However, training DNNs such as Generative Adversarial Networks (GANs) require solving a general class of min-max optimization problems~\cite{goodfellow2014generative, arjovsky2017wasserstein} which due to its difficulty, keeps other generative models attractive~\cite{barazandeh2019training, barazandeh2018behavior}. 

The goal of this paper is to generalize adaptive momentum methods to solve a general class of \textit{non-convex-non-concave min-max problems}. It develops an adaptive algorithm for solving min-max saddle point games and theoretically analyzes its convergence rate. The performance of the developed algorithm is assessed on training GANs.

The remainder of the paper is organized as follows. Section~\ref{sec:Problem_for} provides the formulation of the min-max problem, Section~\ref{sec:algorithm} describes the proposed algorithm and Section~\ref{sec:CA} investigates its convergence properties. Finally, Section~\ref{sec:NS} provides numerical results for training GANs.

\section{Formulation of the Min-Max Optimization Problem}\label{sec:Problem_for}

Consider the \textit{stochastic} min-max saddle point problem
\begin{equation}\label{eq:main_game}
\min\limits_{\btheta }\max\limits_{\balpha} F(\btheta, \balpha) = \E_{\bxi \sim \mathcal{D}}[f(\btheta,\balpha;\bxi)],
\end{equation}
where $\btheta \in \mathbb{R}^{p_1}$, $\balpha \in \mathbb{R}^{p_2}$, $\bxi$ is a random variable drawn from an unknown distribution $\mathcal{D}$, and $F(\btheta,\balpha)$ is a non-convex-non-concave function, i.e., it is non-convex in $\btheta$ for any given $\balpha$ and is non-concave in $\balpha$ for any given $\btheta$. 

Next, we introduce necessary notation and definitions. Throughout, $\y := (\btheta,\balpha) \in \mathbb{R}^{p_1} \times \mathbb{R}^{p_2}$ and denote the objective function of Game~\eqref{eq:main_game} and its random realization by $F(\y)$ and $f(\y;\bxi)$, respectively. Further, $ \nabla F(\y) = [\nabla_{\btheta}F(\btheta,\balpha), -\nabla_{\balpha}F(\btheta,\balpha)]$ and $\nabla f(\y;\bxi) = [\nabla_{\btheta}f(\btheta,\balpha;\bxi), -\nabla_{\balpha}f(\btheta,\balpha;\xi)]$ denotes the corresponding gradient and stochastic gradient of the objective function, respectively.

\begin{deff}[Nash Equilibrium] \label{def:ne} A point $(\btheta^*, \balpha^*) \in \mathbb{R}^{p_1} \times \mathbb{R}^{p_2}$ is a Nash equilibrium of Game~\eqref{eq:main_game} if
\begin{equation*}
F(\btheta^*, \balpha) \leq  F(\btheta^*,\balpha^*) \leq F(\btheta, \balpha^*), \ \
\forall (\btheta, \balpha) \in \mathbb{R}^{p_1} \times \mathbb{R}^{p_2}. 
\end{equation*}
\end{deff}
This definition implies that $\btheta^*$ is a global minimum of $F(\cdot, \balpha^*)$ and $\balpha^*$ is a global maximum of $F(\btheta^*, \cdot)$. In the convex-concave regime with $F(\btheta,\balpha)$ being convex in $\btheta$ for any given $\balpha$ and concave in $\balpha$ for any given $\btheta$, the Nash equilibrium always exists~\cite{jin2019minmax} and there are several algorithms for identifying it~\cite{gidel2017frank,hamedani2018iteration}. However, computing a Nash equilibrium point is NP-hard in general~\cite{daskalakis2017training,jin2019minmax}, and it may not even exist~\cite{farnia2020gans}. As a result, since we are considering the general non-convex-non-concave regime, we settle in computing a \textit{first-order Nash equilibrium} point~\cite{nouiehed2019solving,barazandeh2020solving} defined next.
\begin{deff}[First-Order Nash Equilibrium (FNE)] \label{deff:FN}
A point $\y^* \in \mathbb{R}^{p_1} \times \mathbb{R}^{p_2}$ is a first-order Nash equilibrium point of Game~\eqref{eq:main_game}, if $ \nabla F(\y^*) = 0$. 
\end{deff}

Note that at a FNE point, each player satisfies the first-order optimality condition of its own objective function when the strategy of the other player is fixed  \cite{pang2016unified,pang2011nonconvex}. In practice, iterative algorithms are used for computing a FNE for a stochastic problem. As a result, the performance of different iterative algorithms are evaluated based on the following \textit{approximate} stochastic FNE definition.
\begin{deff}[$\epsilon$-Stochastic First-Order Nash Equilibrium (SFNE)] \label{deff:EFN} A random variable $\y^*$ is an approximate SFNE~($\epsilon$-SFNE) point of Game~\eqref{eq:main_game} if 
$
\E\left[\|\nabla F(\y^*)\|^2 \right] \leq \epsilon^2,
$
where the expectation is taken over the distribution of the random variable $\y^*$. 
\end{deff}
The randomness of variable $\y^*$ in Definition 3 comes from the use of iterative algorithms that have access to stochastic gradients of the objective function (see, e.g., Algorithm \ADAM  \ below). The objective of this work is to find an $\epsilon$-SFNE point for Game~\eqref{eq:main_game} using an iterative method based on adaptive momentum.

\section{THE \ADAM Algorithm}\label{sec:algorithm}

The proposed ADAptive Momentum Min-Max (\ADAM) algorithm comes with convergence guarantees for solving a general class of \textit{non-convex-non-concave} saddle point games defined in~\eqref{eq:main_game}.  It is obtained by integrating \textsc{AMSGrad} \cite{reddi2018convergence}, a modified version of \textsc{Adam} \cite{kingma2014adam}, with a stochastic extra-gradient method \cite{iusem2017extragradient}.
\begin{algorithm}
    \SetAlgoLined
 	\SetKwInOut{Input}{Input}
	\SetKwInOut{Output}{Output}
\Input{$\{\beta_{1,k}\}_{k=1}^N, \beta_2, \beta_3 \in [0,1)$, $m \in \mathbb{N}$, and $ \eta \in \R_+$\;}
\BlankLine
Initialize $\z_{0} = \x_{0}  = \m_{0}= \wi_{0} = \textbf{d}_{0}= \bm{0}_{d}$. 
\BlankLine
		\For{$ k =1:N$}
	{
	 
	    $\z_{k} =  \x_{k-1}  - \eta \textbf{d}_{k-1}$; %\qquad{{\small\texttt{//update local estimate $\z_{i,k}$}}}\\
	    
	    Draw $\bxi_{k} = (\bxi_{k}^{1}, \cdots, \bxi_{k}^{m})$ from $\mathcal{D}$, and set $\seg_{k} =\frac{1}{m} \sum_{i = 1}^m \nabla f(\z_{k};\bxi_{k}^{i})$; %\qquad{\small\texttt{//estimate the stochastic gradient\\
	    
	    $\m_{k} = \beta_{1,k} \m_{k-1} + (1-\beta_{1,k}) \seg_{k}$; %\\ \qquad{\small\texttt{//update biased 1st moment estimate}}\\
	    
	    $ \wi_{k} =   \beta_2 \wi_{k-1} + (1-\beta_2) \seg_{k} \odot \seg_{k}$;% \qquad{\small\texttt{//update biased 2nd moment estimate}}\\
	    
	    $ \sVtori_{k} = \beta_3\sVtori_{k-1} + (1-\beta_{3})\max(\sVtori_{k-1}, \wi_{k} )$; % \qquad{\small\texttt{//normalize  2nd moment }}\\
	    
	    $\textbf{d}_{k} =  {\sVtori_{k}^{-\frac{1}{2}}} \odot \m_{k}$; %\qquad{\small\texttt{//scale the adaptive gradient}}\\
	    
	    $\x_{k} =  \x_{k-1} - \eta \;\textbf{d}_{k}$; 
	    %\qquad{\small\texttt{//update local estimate $\x_{i,k}$}}
	}
     \caption{ADAptive Momentum Min-Max~({\ADAM})}
     \label{alg:0}
     $\odot$: Element-wise vector multiplication
    \end{algorithm}
As seen in Algorithm~\ref{alg:0}, \ADAM~generates two sequences ${\x_k}$ and ${\z_k}$, where ${\x_k}$ is an ancillary sequence and the stochastic gradient is only computed over the sequence of $\z_{k}$'s using a mini-batch of size $m$, i.e., $\seg_{k} = 1/m \sum_{i = 1}^m \nabla f(\z_{k};\bxi_{k}^i)$. Using a mini-batch for estimating the gradient is a commonly used approach and more details are available in~\cite{lin2019gradient} and references therein. After estimating the gradient,  the algorithm calculates the momentum direction, $\m_{k}$, as an exponential moving average of the past gradients. Then, $\m_{k}$ is adaptively scaled by the square root of the exponential moving average of squared past gradients $\sVtori_{k}$.

The following remarks about \ADAM{} are in order: \\
\textbf{(1)}
The square and the maximum operators are applied element-wise. In some applications, to prevent division by zero, we may add a small positive constant $\epsilon$ to $\wi_{k}$ \cite{nazari2019dadam}. Further, a mini-batch of size $m$ is used in each iteration to estimate the gradient's value. \\
\textbf{(2)}
\ADAM{} computes adaptive learning rates from estimates of the second moments of the gradients, similar to \cite{nazari2019dadam}. In particular, it uses a larger learning rate compared to \textsc{AMSGrad} and yet incorporates the intuition of slowly decaying the effect of previous gradients on the learning rate. The decay parameter $\beta_{3}$ is an important component of \ADAM{}, that enables establishing its convergence properties similar to \textsc{AMSGrad} ($\beta_3= 0$), while maintaining the efficiency of \textsc{Adam}.

\section{Convergence Analysis}\label{sec:CA}
We start by positing the following assumptions:

\begin{assum}\label{assumption:function}
For all $\x \in \mathbb{R}^d$, 
\begin{enumerate}
    \item  \label{assumption:g_unbi} $\E_{\bxi \sim \mathcal{D}}[\nabla f(\x,\bxi)] = \nabla F(\x)$.
    \item  \label{assumption:g_bounded} The function $f(\x,\bxi)$ has a $G_{\infty}$-bounded gradient, i.e., $\forall \ \bxi \sim \mathcal{D}$, it holds that $\|\nabla f(\x,\bxi)\|_{\infty} \leq G_{\infty} <\infty$. 
    \item  \label{assu:bounded_var}  The function $F$ has bounded variance, i.e., 
$$\mathbb{E}_{\bxi \sim \mathcal{D}}\left[\|\nabla f(\x,\bxi) -\nabla F(\x)\|^2\right] = \sigma^2<\infty.$$
\end{enumerate}
\end{assum}
The above assumptions are fairly standard in the non-convex optimization literature \cite{bottou2018optimization,gower2019sgd}. Further, Assumption~\ref{assumption:function}(\ref{assumption:g_bounded}) is slightly stronger than the assumption $ \|\nabla f(\x,\bxi)\| \leq G_2$ that is commonly used in the analysis of stochastic gradient descent. However, Assumption~\ref{assumption:function}(\ref{assumption:g_bounded}) is crucial for the convergence analysis of adaptive methods~\cite{zaheer2018adaptive,liu2019towards,nazari2019dadam,nazari2020adaptive}.
\begin{assum}[Lipschitz Gradient]\label{assumption:g_lip}
The function $F$ is $L$-smooth, i.e., 
\begin{align*}
\|\nabla F(\x) - \nabla F(\textbf{y})\| \leq L \|\x - \textbf{y}\|, \quad 
\textnormal{for all} \quad \x, \textbf{y} \in \mathbb{R}^d.    
\end{align*}
\end{assum} 
The above assumption is standard and commonly used in the optimization literature~\cite{nesterov1998introductory,nesterov2013gradient}.
\begin{assum}[Minty condition]\label{assumption:minty} There exits $\x_*\in \mathbb{R}^d$ such that for any $\x \in \mathbb{R}^d$ we have
\[
\langle \x-\x_*, \nabla F(\x) \rangle \geq 0.
\]
\end{assum}
As explained in~\cite{razaviyayn2020nonconvex,liu2019towards} and references therein, the Minty condition is a commonly used assumption in the literature for analyzing non-convex min-max games and is weaker than other benchmark assumptions such as pseudo-monotonicity or monotonicity~\cite{mertikopoulos2019optimistic}. 
\begin{assum}\label{assumption:bounded-space} 
For the point $\x^*$ satisfying the Minty condition and all iterates $k$ generated by Algorithm~\ref{alg:0}, we have $\|\x_*\| \leq \frac{D}{2}$ and $\|\x_{k}\| \leq \frac{D}{2}$. 
\end{assum}
This assumption is required in the analysis of min-max saddle point games and has been used in~\cite{liu2019towards, liu2019decentralized}. This assumption holds true in the training process of DNNs that have normalization layers in their structure~\cite{tan2020improved, karras2017progressive,kurach2019large}.
\begin{assum}\label{assumption:bounded-v} 
In Algorithm~\ref{alg:0}, $G^2_{0} \leq \|\sVtori_0\|_{\infty}$.  
\end{assum}
This assumption is required in the analysis of adaptive methods~\cite{nazari2020adaptive,nazari2019dadam} and can be easily satisfied in the initialization step of the proposed algorithm. Next, we introduce lemmas used to establish the main result.
\begin{lemma}\label{lem:useful_bounds}[\cite{tran2019convergence}, Lemma 4.2] Let Assumption~\ref{assumption:function}~(\ref{assumption:g_bounded}) hold. Then, in Algorithm~\ref{alg:0} we have $\|\m_{k}\|_{\infty} \leq G_{\infty}$ and $\|\sVtori_{k}\|_{\infty} \leq G_{\infty}^2$ for all $k \in \{1 \cdots N \}$. 
\end{lemma}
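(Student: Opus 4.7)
The plan is to prove both bounds by a straightforward induction on $k$, exploiting the fact that each of $\m_k$, $\wi_k$, and $\sVtori_k$ is (up to an element-wise max in the last case) a convex combination of quantities already controlled by $G_\infty$-type bounds. I would first record the elementary consequence of Assumption~\ref{assumption:function}(\ref{assumption:g_bounded}) that the mini-batch stochastic gradient itself is infinity-norm bounded: by the triangle inequality,
\[
\|\seg_k\|_\infty = \Big\|\tfrac{1}{m}\sum_{i=1}^m \nabla f(\z_k;\bxi_k^i)\Big\|_\infty \leq \tfrac{1}{m}\sum_{i=1}^m \|\nabla f(\z_k;\bxi_k^i)\|_\infty \leq G_\infty,
\]
and hence $\|\seg_k\odot\seg_k\|_\infty = \|\seg_k\|_\infty^2 \leq G_\infty^2$.

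For the momentum bound I would induct on $k$. The base case $\|\m_0\|_\infty = 0 \leq G_\infty$ is immediate from the initialization. For the inductive step, the update $\m_k = \beta_{1,k}\m_{k-1} + (1-\beta_{1,k})\seg_k$ with $\beta_{1,k}\in[0,1)$ yields, via the triangle inequality,
\[
\|\m_k\|_\infty \leq \beta_{1,k}\|\m_{k-1}\|_\infty + (1-\beta_{1,k})\|\seg_k\|_\infty \leq \beta_{1,k}G_\infty + (1-\beta_{1,k})G_\infty = G_\infty.
\]
The bound on $\wi_k$ follows the same template with $G_\infty^2$ in place of $G_\infty$, applied to the update $\wi_k = \beta_2\wi_{k-1} + (1-\beta_2)\,\seg_k\odot\seg_k$ and the initialization $\wi_0=\bm{0}$.

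For $\sVtori_k$, the key observation is that the coordinate-wise maximum is monotone in $\ell_\infty$: for any two non-negative vectors $a,b$ one has $\|\max(a,b)\|_\infty \leq \max(\|a\|_\infty,\|b\|_\infty)$. Assuming inductively that $\|\sVtori_{k-1}\|_\infty\leq G_\infty^2$ (the base case at $k=0$ is arranged by the initialization underlying Assumption~\ref{assumption:bounded-v}, which is compatible with $\|\sVtori_0\|_\infty\leq G_\infty^2$), and combining with $\|\wi_k\|_\infty\leq G_\infty^2$, one gets $\|\max(\sVtori_{k-1},\wi_k)\|_\infty \leq G_\infty^2$. The convex combination with $\beta_3\in[0,1)$ in the update of $\sVtori_k$ then preserves this bound exactly as in the $\m_k$ argument.

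There is no genuine obstacle here; this is a routine invariance argument. The only point needing some care is handling the element-wise $\max$ operator in the $\sVtori_k$ recursion, which is why I would isolate the monotonicity property of $\max$ in $\ell_\infty$ as a small separate observation before plugging into the convex-combination step.
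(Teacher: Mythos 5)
Your proof is correct. The paper does not actually prove this lemma itself --- it imports it by citation from Lemma~4.2 of \cite{tran2019convergence} --- and your induction (bounding $\|\seg_k\|_\infty$ by the triangle inequality, propagating the bound through the convex-combination updates of $\m_k$ and $\wi_k$, and handling the element-wise $\max$ in the $\sVtori_k$ recursion via its monotonicity in the $\ell_\infty$ norm) is precisely the standard argument one would expect behind that citation, including your sensible handling of the base case for $\sVtori_0$, where the algorithm's stated initialization and Assumption~\ref{assumption:bounded-v} must be reconciled.
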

\begin{lemma}\label{VM_bound}
Assume that $\gamma := \beta_{1,1}/\beta_2 \leq 1$ in Algorithm~\ref{alg:0}. Then, for each $k \in \{1 \cdots N \}$ we have
\begin{align*}
\|\tilde{v}_{k}^{-\frac{1}{2}} \circ m_{k-1}\| \leq \sqrt{\frac{d}{u_c}}, 
\end{align*}
where $u_c := (1-\beta_3)(1-\beta_{1,1}) (1-\beta_2) (1-\gamma). $
\end{lemma}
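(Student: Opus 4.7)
The plan is to bound $\|\tilde v_k^{-1/2}\odot m_{k-1}\|^2$ coordinate-wise and then sum over the $d$ dimensions; the key ingredients are an unrolling of the two momentum recursions, a coordinate-wise Cauchy-Schwarz step, an elementary lower bound on $\tilde v_k$, and the geometric-series summability implied by $\gamma<1$.

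First, I would unroll $m_k=\beta_{1,k}m_{k-1}+(1-\beta_{1,k})\hat g_k$ to obtain $m_{k-1}=\sum_{j=1}^{k-1}c_{k-1,j}\,\hat g_j$ with $c_{k-1,j}=(1-\beta_{1,j})\prod_{l=j+1}^{k-1}\beta_{1,l}$; a short telescoping argument gives $\sum_j c_{k-1,j}\leq 1$, and the standard assumption that $\{\beta_{1,k}\}$ is non-increasing yields $c_{k-1,j}\leq \beta_{1,1}^{k-1-j}$. Similarly, unrolling $v_k=\beta_2 v_{k-1}+(1-\beta_2)\hat g_k\odot \hat g_k$ gives $v_k=(1-\beta_2)\sum_{j=1}^{k}\beta_2^{k-j}\hat g_j\odot \hat g_j$, and the update of $\tilde v_k$ gives $\tilde v_k\geq(1-\beta_3)\max(\tilde v_{k-1},v_k)\geq (1-\beta_3)v_k$ component-wise, hence $(\tilde v_k)_i\geq (1-\beta_3)(1-\beta_2)\sum_{j=1}^{k-1}\beta_2^{k-j}(\hat g_j)_i^2$ after discarding the nonnegative $j{=}k$ term.

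Second, Cauchy-Schwarz coordinate-wise gives $(m_{k-1})_i^2\leq \bigl(\sum_j c_{k-1,j}\bigr)\bigl(\sum_j c_{k-1,j}(\hat g_j)_i^2\bigr)\leq \sum_{j=1}^{k-1}\beta_{1,1}^{k-1-j}(\hat g_j)_i^2$. Dividing by $(\tilde v_k)_i$ and applying the elementary inequality $\sum_j A_j x_j/\sum_\ell B_\ell x_\ell\leq \sum_j A_j/B_j$ (valid because $B_j x_j\leq \sum_\ell B_\ell x_\ell$ for nonnegative $x_j$ and $B_j$) reduces the ratio to $\beta_2^{-1}\sum_{j=1}^{k-1}\gamma^{k-1-j}\leq [\beta_2(1-\gamma)]^{-1}$. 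Combining this with the prefactor $1/[(1-\beta_3)(1-\beta_2)]$ yields the per-coordinate bound $(m_{k-1})_i^2/(\tilde v_k)_i\leq 1/[(1-\beta_3)(1-\beta_2)\beta_2(1-\gamma)]$, and summing over $i=1,\ldots,d$ completes the bound once $1/\beta_2$ is absorbed into $1/(1-\beta_{1,1})$.

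The main obstacle is matching the exact constant appearing in the statement: the natural Cauchy-Schwarz estimate produces $1/\beta_2$ where $u_c$ contains $1/(1-\beta_{1,1})$. This last step relies on the standard Adam-type parameter regime $\beta_{1,1}+\beta_2\geq 1$, equivalently $1/\beta_2\leq 1/(1-\beta_{1,1})$; alternatively, one can sharpen the Cauchy-Schwarz step by retaining the exact coefficient $(1-\beta_{1,j})$ from $c_{k-1,j}$ rather than bounding it by $1$. Everything else is routine geometric-series algebra, and no appeal to Lemma~\ref{lem:useful_bounds} is strictly necessary since the bound is homogeneous in the stochastic gradients.
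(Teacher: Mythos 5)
Your overall strategy --- unroll both momentum recursions, apply Cauchy--Schwarz coordinate-wise, pass to the ratio inequality $\sum_j A_j x_j/\sum_\ell B_\ell x_\ell\le\sum_j A_j/B_j$, and sum the resulting geometric series using $\gamma\le 1$ --- is exactly the paper's. The gap is in the constant, and you have correctly located it but not closed it. Because you lower-bound $\tilde v_k$ by $(1-\beta_3)v_k$ and then discard the $j=k$ term, your denominator is $(1-\beta_3)(1-\beta_2)\sum_{j=1}^{k-1}\beta_2^{\,k-j}(\hat g_j)_i^2$, whose exponents are shifted by one relative to the numerator's $\beta_{1,1}^{\,k-1-j}$; that shift is precisely the source of the stray $1/\beta_2$. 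Neither of your proposed remedies repairs this under the lemma's hypotheses: the regime $\beta_{1,1}+\beta_2\ge 1$ is nowhere assumed (the lemma only assumes $\gamma=\beta_{1,1}/\beta_2\le 1$, which permits e.g.\ $\beta_{1,1}=0.1$, $\beta_2=0.2$, where $1/\beta_2=5>1/(1-\beta_{1,1})\approx 1.11$), and retaining the factor $(1-\beta_{1,j})\le 1$ inside $c_{k-1,j}$ only multiplies each summand by something at most $1$, so it cannot cancel a multiplicative $1/\beta_2$.

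The correct repair is index bookkeeping, which is what the paper does: since $\tilde v_{r,k}=\beta_3\tilde v_{r,k-1}+(1-\beta_3)\max(\tilde v_{r,k-1},v_{r,k})\ge\tilde v_{r,k-1}\ge(1-\beta_3)v_{r,k-1}$, you may divide $m_{r,k-1}^2$ by $(1-\beta_3)v_{r,k-1}=(1-\beta_3)(1-\beta_2)\sum_{j=1}^{k-1}\beta_2^{\,k-1-j}\hat g_{r,j}^2$, so the exponents in numerator and denominator match and the ratio inequality yields $\sum_j\gamma^{\,k-1-j}\le 1/(1-\gamma)$ with no $1/\beta_2$. Combined with your (sharper) bound $\sum_j c_{k-1,j}\le 1$ on the first Cauchy--Schwarz factor, this route even gives the per-coordinate bound $1/[(1-\beta_3)(1-\beta_2)(1-\gamma)]$, which implies the stated bound since $1/(1-\beta_{1,1})\ge 1$; the paper instead bounds that first factor by $\sum_j\beta_{1,1}^{\,k-1-j}\le 1/(1-\beta_{1,1})$, which is where the $(1-\beta_{1,1})$ in $u_c$ originates. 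So the idea is sound and the fix is one line, but as written your argument establishes a constant that is not $1/u_c$ except under an assumption the lemma does not make.
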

The following Theorem~\ref{thm:main} establishes the main result by providing an upper bound for the average norm of the gradient of the objective function.  
\begin{thm}\label{thm:main}
Let Assumptions \ref{assumption:function}--\ref{assumption:bounded-v} hold, and $L$, $G_\infty$, $G_0$, $\sigma$ be defined therein. In Algorithm~\ref{alg:0}, if we choose 
\begin{align*}
&\eta \leq \sqrt{G_0^3/(56L^2G_{\infty})},\beta_{1,k} = \beta_{1,1} \kappa^{k-1}, \beta_{1,1} \leq \frac{\sqrt{C}}{\sqrt{C} + 1}
\end{align*}
where $\kappa \in (0,1)$ and $ C = \frac{(1+\kappa)\kappa^2 G_0^3}{168(1-\kappa) G^3_{\infty}}$, then 
\begin{align}\label{eqn:upgrad}
\frac{1}{N}\sum_{k = 1}^N \E \|\nabla F (\z_k)\|^2 \leq  \frac{C_1}{N}+ \frac{ C_2\sigma^2 }{m},
\end{align}
\end{thm}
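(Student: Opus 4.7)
The plan is to run a Lyapunov-style descent analysis along the ancillary sequence $\{\x_k\}$ of Algorithm~\ref{alg:0}, exploit the extra-gradient structure to bring the query point $\z_k$ into play, and then combine the Minty condition with the a priori bounds from Lemmas~\ref{lem:useful_bounds} and~\ref{VM_bound} to extract the desired gradient norm. Concretely, expanding $\|\x_k - \x_*\|^2 = \|\x_{k-1} - \eta \bd_k - \x_*\|^2$ and using the identity $\x_{k-1} - \x_* = (\z_k - \x_*) + \eta\, \bd_{k-1}$, which is immediate from $\z_k = \x_{k-1} - \eta\,\bd_{k-1}$, yields a one-step recursion whose leading term is $-2\eta\,\langle \bd_k, \z_k - \x_*\rangle$. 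This is the inner product that the Minty condition will eventually render non-negative once $\bd_k$ is replaced by a suitable approximation of $\nabla F(\z_k)$.

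Next, I would decompose $\bd_k = \sVtori_k^{-1/2} \odot \m_k$ into three pieces via the momentum recursion $\m_k = \beta_{1,k}\m_{k-1} + (1-\beta_{1,k})\seg_k$: an adaptively scaled exact gradient $\sVtori_k^{-1/2} \odot \nabla F(\z_k)$, a momentum-bias term involving $\sVtori_k^{-1/2} \odot \m_{k-1}$ (controlled by Lemma~\ref{VM_bound}), and a stochastic-noise term $\sVtori_k^{-1/2}\odot (\seg_k - \nabla F(\z_k))$ whose conditional expectation vanishes and whose second moment is bounded by $\sigma^2/m$ through Assumption~\ref{assumption:function}(\ref{assu:bounded_var}). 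Lemma~\ref{lem:useful_bounds} together with Assumption~\ref{assumption:bounded-v} supplies the pointwise sandwich $G_0^{-1} \leq \sVtori_{k,i}^{-1/2} \leq G_\infty^{-1}$, which lets me factor a Minty-friendly $G_\infty^{-1}\langle \nabla F(\z_k), \z_k - \x_*\rangle \geq 0$ out of the scaled inner product and relegate the coordinate-wise discrepancy to a residual bounded via Assumption~\ref{assumption:bounded-space}.

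To produce the $\|\nabla F(\z_k)\|^2$ that appears on the left of~\eqref{eqn:upgrad}, I would couple the previous bound with $L$-smoothness applied to the shift $\z_k - \x_{k-1} = -\eta\,\bd_{k-1}$ and use $\|\sVtori_k^{-1/2}\odot\m_{k-1}\|^2 \leq d/u_c$ from Lemma~\ref{VM_bound}, so that the $\eta^2\|\bd_k\|^2$ and smoothness-residual terms can be absorbed into a small fraction of $\eta\,G_\infty^{-1}\|\nabla F(\z_k)\|^2$; the step-size restriction $\eta \leq \sqrt{G_0^3/(56L^2G_\infty)}$ is calibrated for exactly this absorption. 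Taking conditional expectations, summing over $k = 1,\ldots,N$, and telescoping the Lyapunov terms via $\sum_k (\|\x_{k-1}-\x_*\|^2 - \|\x_k-\x_*\|^2) \leq D^2$ (Assumption~\ref{assumption:bounded-space}) yields the claimed rate: the geometric decay $\beta_{1,k} = \beta_{1,1}\kappa^{k-1}$ forces $\sum_k \beta_{1,k} < \infty$ so that the cumulative momentum bias collapses into the constant $C_1$, while the per-step variance injects the $C_2\sigma^2/m$ term, with $\beta_{1,1} \leq \sqrt{C}/(\sqrt{C}+1)$ ensuring that the recursive absorption of momentum bias does not blow up.

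The main technical obstacle is the tension between the Minty condition and the adaptive coordinate-wise preconditioner: Minty guarantees $\langle \nabla F(\z_k), \z_k - \x_*\rangle \geq 0$, but the quantity actually appearing in the descent is $\langle \sVtori_k^{-1/2}\odot \nabla F(\z_k), \z_k - \x_*\rangle$, which is not a priori non-negative when $\sVtori_k$ has heterogeneous entries. Handling this cleanly requires the uniform sandwich on $\sVtori_k$, the boundedness of iterates, and careful bookkeeping of the remainder $\langle (\sVtori_k^{-1/2} - G_\infty^{-1}\I)\nabla F(\z_k), \z_k - \x_*\rangle$ so that it is absorbed either into the Lyapunov telescope or into a small multiple of the target $\|\nabla F(\z_k)\|^2$; this is the bookkeeping that dictates the precise form of the constants $C_1$ and $C_2$ and the restrictions on $\eta$ and $\beta_{1,1}$.
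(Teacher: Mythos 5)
Your overall architecture is close to the paper's: a one-step recursion on the distance to the Minty point $\x_*$, a decomposition of $\bd_k$ into a momentum-bias piece (Lemma~\ref{VM_bound}), a noise piece with variance $\sigma^2/m$, and a gradient piece handled by Assumption~\ref{assumption:minty}, followed by telescoping and absorption under the stated restrictions on $\eta$ and $\beta_{1,1}$. However, there is a genuine gap at exactly the point you flag as the main technical obstacle, and your proposed remedy does not close it. You telescope the \emph{unweighted} Lyapunov function $\|\x_k-\x_*\|^2$ and plan to split the cross term into $G_\infty^{-1}\langle \nabla F(\z_k),\z_k-\x_*\rangle\ge 0$ plus the remainder $\langle(\sVtori_k^{-1/2}-G_\infty^{-1}\I)\circ\nabla F(\z_k),\z_k-\x_*\rangle$. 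That remainder is sign-indefinite, has a per-iteration coefficient of order $G_0^{-1}-G_\infty^{-1}$ that does not decay in $k$, and enters the recursion multiplied by $\eta$; bounding it by Young's inequality leaves a constant per iteration, so after summing over $N$ and normalizing you obtain an $O(1)$ residual rather than $C_1/N$. The sandwich $G_\infty^{-1}\le \tilde v_{k,i}^{-1/2}\le G_0^{-1}$ controls only the magnitude, not the sign, so nothing forces this term into the telescope or into a small multiple of $\|\nabla F(\z_k)\|^2$.

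The paper avoids this problem structurally rather than by absorption: it telescopes the \emph{weighted} Lyapunov function $\|\sVtori_{k-1}^{1/4}\circ(\x_k-\x_*)\|^2$ (Lemma~\ref{lem:teles}), so that in the cross term $\langle \sVtori_{k-1}^{1/4}\circ(\x_*-\z_k),\sVtori_{k-1}^{1/4}\circ\sVtorip_{k-1}\circ\srg_k\rangle$ the preconditioner cancels exactly and the Minty condition applies to the clean inner product $\langle\x_*-\z_k,\srg_k\rangle\le 0$ (equation~\eqref{eq:bbb00}). The price is a drift term $\|\sVtorip_k-\sVtorip_{k-1}\|_1$, but this is summable over $k$ by Lemma~\ref{lem:v_diff}, which is what preserves the $C_1/N$ rate. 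A second, smaller divergence: the paper does not extract $\|\nabla F(\z_k)\|^2$ from a smoothness-based descent inequality as you propose, but from the purely algebraic Step~1 bound relating $\|\srg_k\|^2$ to the extragradient discrepancies $\|\z_k-\x_k\|^2+\|\z_k-\x_{k-1}\|^2$, which are then controlled by the weighted telescope. To repair your argument you would need to either adopt the weighted Lyapunov function or otherwise show your remainder is summable; as written, the plan does not yield the claimed rate.
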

for some positive constants $C_1$ and $C_2$.

\begin{cor}\label{cor:1}
Under assumptions in Theorem~\ref{thm:main}, if $N \geq 3 C_1 \epsilon^{-2}$ and $m \geq 3 C_2 \sigma^2 \epsilon^{-2}$, then there exists an iterate $\z_k$, $k \in \{1, \cdots, N\}$ that is an $\epsilon$-SFNE point of Game~\eqref{eq:main_game}.
\end{cor}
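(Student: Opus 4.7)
The plan is to derive the corollary as a direct consequence of Theorem~\ref{thm:main} by plugging the stated lower bounds on $N$ and $m$ into bound~\eqref{eqn:upgrad} and invoking a standard averaging argument.

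First, I would substitute $N \geq 3C_1\epsilon^{-2}$ into the first term of the right-hand side of~\eqref{eqn:upgrad}, which immediately yields $C_1/N \leq \epsilon^2/3$. Similarly, substituting $m \geq 3C_2\sigma^2\epsilon^{-2}$ into the second term gives $C_2\sigma^2/m \leq \epsilon^2/3$. Adding the two bounds, Theorem~\ref{thm:main} delivers
\begin{equation*}
\frac{1}{N}\sum_{k=1}^{N} \E\|\nabla F(\z_k)\|^2 \;\leq\; \frac{2\epsilon^2}{3} \;\leq\; \epsilon^2.
\end{equation*}

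Next, I would convert this average bound into an existence statement. Since the minimum of a collection of non-negative numbers is no larger than their arithmetic mean, there must exist at least one index $k^\star \in \{1,\dots,N\}$ for which $\E\|\nabla F(\z_{k^\star})\|^2 \leq \epsilon^2$. Equivalently, one can view $\z_{k^\star}$ as being drawn uniformly at random from $\{\z_1,\dots,\z_N\}$ so that the expectation (over both the algorithm's randomness and the uniform index) equals the time average, which is again bounded by $\epsilon^2$. In either interpretation, the resulting iterate satisfies the $\epsilon$-SFNE condition of Definition~\ref{deff:EFN}.

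There is essentially no technical obstacle here: the corollary is a mechanical consequence of Theorem~\ref{thm:main}. The only subtlety worth flagging is ensuring that the random variable $\y^*$ in Definition~\ref{deff:EFN} is interpreted consistently with either (i) the existence of a deterministic index $k^\star$ whose associated random iterate $\z_{k^\star}$ satisfies the bound, or (ii) a randomized output rule that picks the index uniformly at random; both interpretations are compatible with the definition and with how such corollaries are stated throughout the non-convex min-max literature.
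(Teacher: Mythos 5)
Your proof is correct and is exactly the argument the paper implicitly relies on (the corollary is stated without proof as an immediate consequence of Theorem~\ref{thm:main}): substituting the lower bounds on $N$ and $m$ into~\eqref{eqn:upgrad} gives an average of at most $2\epsilon^2/3 \leq \epsilon^2$, and the minimum-is-at-most-the-mean argument yields an index $k^\star$ with $\E\|\nabla F(\z_{k^\star})\|^2 \leq \epsilon^2$, matching Definition~\ref{deff:EFN}. Your remark about the two equivalent readings of the random output (deterministic best index versus uniformly sampled index) is a fair clarification and does not change the conclusion.
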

% \begin{proof}
% In the proof of Theorem~\ref{thm:main} it was shown that
% \begin{align*}
% \frac{1}{N}\sum_{k = 1}^N \E \|\nabla F (\z_k)\|^2 \leq  \frac{C_1}{N}+ \frac{ C_2\sigma^2 }{m}    
% \end{align*}
% for some constants $C_1$ and $C_2$. Next, select $N \geq 3 C_1 \epsilon^{-2}$ and $m \geq 3 C_2 \sigma^2 \epsilon^{-2}$, and then at least one of the iterates $k \in \{1,\cdots,N\}$ is an $\epsilon$-SFNE point.
% \end{proof}

\begin{cor}\label{cor:2}
Algorithm~\ref{alg:0} requires $\mathcal{O} (\epsilon^{-4})$ gradient evaluations of the objective function to find an $\epsilon$-SFNE point  of Game~\ref{eq:main_game}. This is consistent with other adaptive methods such as~\cite{liu2019towards}.
\end{cor}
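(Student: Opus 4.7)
The plan is straightforward: Corollary~\ref{cor:1} already supplies the iteration and mini-batch thresholds that certify an $\epsilon$-SFNE point, so the result will follow by multiplying these two quantities and checking that each inner gradient query is what ``gradient evaluation'' refers to. Concretely, in every outer iteration $k$ of Algorithm~\ref{alg:0} the only gradient computation is the line $\seg_{k} = \frac{1}{m}\sum_{i=1}^m \nabla f(\z_k;\bxi_k^i)$, which costs exactly $m$ evaluations of the stochastic gradient of the objective. Hence after $N$ iterations the total gradient cost is exactly $Nm$.

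To obtain an $\epsilon$-SFNE point I would set $N$ and $m$ to meet the hypothesis of Corollary~\ref{cor:1} with equality up to constants, namely $N = \lceil 3C_1 \epsilon^{-2}\rceil$ and $m = \lceil 3C_2 \sigma^2 \epsilon^{-2}\rceil$, so that~\eqref{eqn:upgrad} yields $\frac{1}{N}\sum_k \E\|\nabla F(\z_k)\|^2 \leq \epsilon^2$ and therefore at least one iterate $\z_k$ satisfies Definition~\ref{deff:EFN}. Substituting the thresholds into the cost $Nm$ gives
\[
Nm \;=\; \lceil 3 C_1 \epsilon^{-2}\rceil \cdot \lceil 3 C_2 \sigma^2 \epsilon^{-2}\rceil \;=\; \mathcal{O}\!\left(\epsilon^{-4}\right),
\]
which is exactly the claimed oracle complexity and matches, for example, the rate established in~\cite{liu2019towards} for the \textsc{OAdagrad}-based extra-gradient scheme.

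There is no real technical obstacle in this corollary; it is an accounting step on top of Theorem~\ref{thm:main} and Corollary~\ref{cor:1}. The only subtlety worth stating explicitly is that the budget $Nm$ is tight only because the two parameters are chosen jointly as functions of $\epsilon$: if $m$ were held fixed independent of $\epsilon$, the variance term $C_2\sigma^2/m$ in~\eqref{eqn:upgrad} would not be driven below $\epsilon^2$ and Corollary~\ref{cor:1} would fail, so the $\mathcal{O}(\epsilon^{-4})$ product cannot be improved within the present analysis.
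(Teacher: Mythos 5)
Your argument is correct and is exactly the accounting the paper intends (the corollary is stated without an explicit proof, but the route is the same): each iteration costs $m$ stochastic gradient evaluations, so the total is $Nm$, and substituting the thresholds $N = \mathcal{O}(C_1\epsilon^{-2})$ and $m = \mathcal{O}(C_2\sigma^2\epsilon^{-2})$ from Corollary~\ref{cor:1} gives $\mathcal{O}(\epsilon^{-4})$. Your closing remark about why $m$ must scale with $\epsilon^{-2}$ to kill the variance term is a correct and worthwhile observation.
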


\section{Numerical Studies}\label{sec:NS}

\textbf{(I) A Synthetic Data Experiment:} \\
Simultaneous \SADAM{} (\SSADAM) is one of most commonly used approaches for solving min-max problems that are formulated using DNNs such as training GANs~\cite{goodfellow2014generative}. In this method, the minimization and the maximization parameters are updated simultaneously using the \SADAM~algorithm~\cite{kingma2014adam}. However, this method fails drastically in solving simple min-max problems.
To better understand this issue, consider solving the following simple stochastic min-max problem    
\begin{equation}\label{eq:numerical}
  f(\theta,\alpha) =
    \begin{cases}
      c(\theta-\alpha) + (\theta^2-\alpha^2)+ k \theta \alpha,  &  \text{w.p\;} \frac{1}{3},  \\
      (\theta-\alpha) + (\theta^2-\alpha^2) + k \theta \alpha, & \text{w.p\;} \frac{2}{3},
    \end{cases}       
\end{equation}
where $c > 1$ and $k \geq 0$. Some calculations lead to 
\begin{equation*}
F(\theta,\alpha) = 
%\frac{1}{3} (c(\theta-\alpha) + (\theta^2-\alpha^2))
%\\
%& + \frac{2}{3} (\theta-\alpha) + (\theta^2-\alpha^2) + k \theta \alpha 
%\\
%& =
\frac{c+2}{3} (\theta-\alpha)+ (\theta^2-\alpha^2)+ k \theta \alpha.    
\end{equation*} 
This problem has the following unique FNE  
\begin{align*}
(\theta^*,\alpha^*) =-\frac{c+2}{3k^2 + 12} (2-k,2+k).    
\end{align*}
Since $\nabla^2_{\theta} F(\theta,\alpha) = 2\I \succ 0$ and $\nabla^2_{\alpha}F(\theta,\alpha) = -2\I \prec 0$, this function is strongly-convex-strongly-concave and many available algorithms ~\cite{thekumparampil2019efficient, ostrovskii2020efficient} can compute its FNE due to its special structure. 

This case study shows that despite the simplicity of the problem,~\SSADAM~ is unable to recover the single FNE point of this function. We also compare the performance of~\SSADAM~ with our proposed algorithm. 
To do the comparison, we define
$e_k = \frac{\|\z_k - \z_*\|}{\|\z_*\|}$ such that $\z_k = (\theta_k, \alpha_k)$ and $\z_* = (\theta_*, \alpha_*)$
and $\mathcal{R}_k = \frac{1}{k} \sum_{i = 1}^k \| \nabla F(\z_k)\|^2$  to measure the performance of different methods. We set the parameters at $c = 1010, k = 0.01, N = 10^{7}, \eta = 10^{-2}, \beta_1 = 0, \beta_2 = 1/(1+c^2)$ and $\beta_3 = 0.1$. All other parameters are initialized at zero. Figure~\ref{fig:twoinone} shows the result of the experiment. We have assigned 2 different scales on the vertical dimension due to space limitations. The left axis depicts the error rate, $e_k$, and the right one the average norm of the gradient, $\mathcal{R}_k$. \ADAM{} converges to the only FNE point, while~\SSADAM~is unable to locate it. This shows that \SSADAM~is unreliable even for a simple strongly-convex-strongly-concave problem.

% \begin{figure}[h]
% \centering
% \includegraphics[width=0.4\textwidth]{images/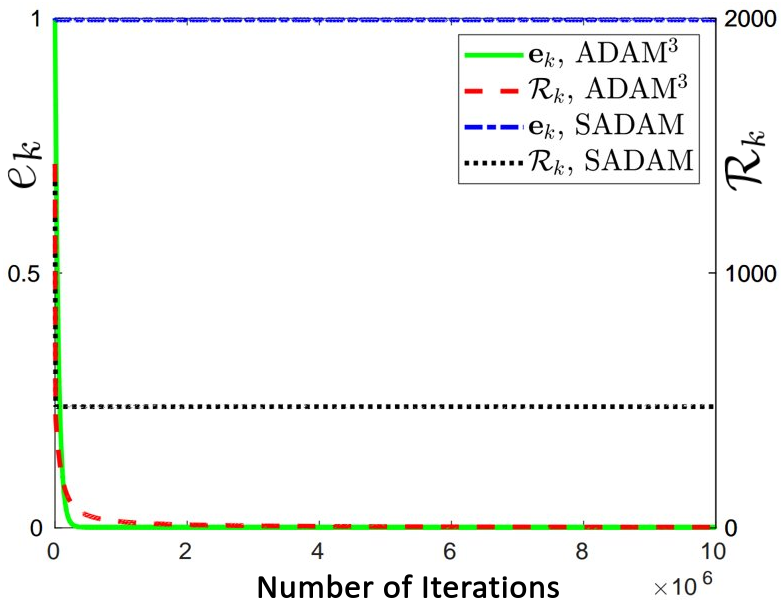}
% \caption{Left/Right $y$-axis: Error rate, $e_t$ / Average norm of gradient, $\mathcal{R}_k$. \SSADAM~ misses the unique FNE point.}
% \label{fig:twoinone}
% \end{figure}
% %
\begin{figure}[h]
\centering
\includegraphics[scale=0.25]{}
\caption{Left/Right $y$-axis: Error rate, $e_t$ / Average norm of gradient, $\mathcal{R}_k$. \SSADAM~ misses the unique FNE point.}
\label{fig:twoinone}
\end{figure}
\textbf{(II) Training GANs with \ADAM: } \\
Algorithm~\ref{alg:0} is used
to train GANs on the publicly available CIFAR-10 data set, containing $60000$ color images of size $32 \times 32$ in $10$ different classes (see  \texttt{\url{https://www.cs.toronto.edu/~kriz/cifar.html}}).

\textit{Models and tasks:}
The generator's network consists of the input layer, 2 hidden layers and the output layer. Each of the input and hidden layers consist of a transposed convolution layer followed by batch normalization and a ReLU activation function. The output layer is a  transposed convolution layer with a hyperbolic tangent activation function. 
The network for the discriminator also has the input layer, 2 hidden layers and the output layer. Both the input and hidden layers are convolutional layers followed by instance normalization and a Leaky ReLU activation function with slope~$0.2$.
The output layer consists only of a convolutional layer.
The scripts containing the detail design of the networks, together with the implementation of \ADAM and its competitor Optimistic AdaGrad (OAdagrad) ~\cite{liu2019towards} in PyTorch, will be available at~\texttt{\url{https://github.com/babakbarazandeh}}. %The detailed design of the networks is: \\
%\begin{small}
%(1) \textit{Generator:} [ConvTranspose2d(100, 1024, 4, 1, 0),~BatchNorm2d(1024), ReLU,
% ConvTranspose2d(1024, 512, 4, 2, 1), BatchNorm2d(512), ReLU, ConvTranspose2d(512, 256, 4, 2, 1), BatchNorm2d(256), ReLU, ConvTranspose2d(256, 3, 4, 2, 1),Tanh ] \\
%(2) \textit{Discriminator:} [Conv2d(3, 256, 4, 2, 1), InstanceNorm2d(256), LeakyReLU(0.2), Conv2d(256, 512, 4, 2, 1), InstanceNorm2d(512), LeakyReLU(0.2), Conv2d(512, 1024, 4, 2, 1), InstanceNorm2d(1024), LeakyReLU(0.2), Conv2d(1024,1,4,1,0) ]
%\end{small}

%We compare our method with the recently proposed Optimistic AdaGrad (OAdagrad) method~\cite{liu2019towards}. Both \ADAM{} and OAdagrad  are implemented in PyTorch and the scripts are available at~\texttt{\url{https://github.com/babakbarazandeh}}. 
The  parameters are set to  $\eta = 0.5 \times 10^{-3}$, $\beta_1 = 0.5$, $\beta_2 = 0.9$ and $\beta_3 = 0.5$, respectively and the batch size to $64$. Finally, the experiment runs for a total of $40,000$ iterations. Figure~\ref{fig:IS}
 depicts the inception score of the generated images, a metric that evaluates their quality
 \cite{salimans2016improved}. It can be seen that \ADAM{} exhibits better performance than OAdagrad at all iteration stages. Some generated samples are available at Figure~\ref{fig:sample}.
\begin{figure}[h]
\centering 
\includegraphics[scale = 0.25]{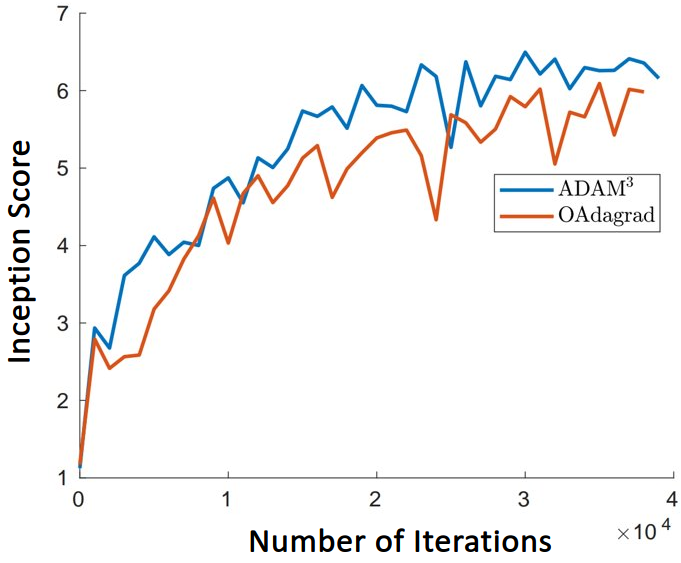}
\caption{Inception score for generated CIFAR-10 images using \ADAM and OAdagrad. }
\label{fig:IS}
\end{figure}
% \subsection{Result}

% In this section, we evaluate the performance of our proposed decentralized algorithm, \DADAM, and compare it with its centralized variant ~\CADAM \textcolor{red}{THIS IS \ADAM, WHY GIVE IT A NEW NAME?}. Additionally, we compare the proposed algorithm with decentralized SGD(\DSGD) and its centralized variant called \CSGD \textcolor{red}{GIVE REF}.

% Our experiments comprise of the following tasks. First, we use different methods for training GANs and output the generated images and report their inception score. Second, we compare the generator loss for a fixed well-trained discriminator. 

\section*{Acknowledgement}
The work of Babak Barazandeh was supported by the UF Informatics Institute and of George Michailidis by NSF grants DMS 1854476 and DMS 1830175. The authors would also like to thank Dr. Meisam Razaviyayn for his insightful comments that helped to improve the quality of the work.

\bibliographystyle{IEEEbib}

\bibliography{Icassp_single_column}

\begin{thebibliography}{10}

\bibitem{bottou2018optimization}
L{\'e}on Bottou, Frank~E Curtis, and Jorge Nocedal,
\newblock ``Optimization methods for large-scale machine learning,''
\newblock {\em Siam Review}, vol. 60, no. 2, pp. 223--311, 2018.

\bibitem{jacobs1988increased}
Robert~A Jacobs,
\newblock ``Increased rates of convergence through learning rate adaptation,''
\newblock {\em Neural networks}, vol. 1, no. 4, pp. 295--307, 1988.

\bibitem{becker1988improving}
Sue Becker, Yann Le~Cun, et~al.,
\newblock ``Improving the convergence of back-propagation learning with second
  order methods,''
\newblock 1988.

\bibitem{duchi2011adaptive}
John Duchi, Elad Hazan, and Yoram Singer,
\newblock ``Adaptive subgradient methods for online learning and stochastic
  optimization.,''
\newblock {\em Journal of machine learning research}, vol. 12, no. 7, 2011.

\bibitem{mcmahan2010adaptive}
H~Brendan McMahan and Matthew Streeter,
\newblock ``Adaptive bound optimization for online convex optimization,''
\newblock {\em arXiv preprint arXiv:1002.4908}, 2010.

\bibitem{kingma2014adam}
Diederik~P Kingma and Jimmy Ba,
\newblock ``Adam: A method for stochastic optimization,''
\newblock {\em arXiv preprint arXiv:1412.6980}, 2014.

\bibitem{reddi2018convergence}
Sashank~J Reddi, Satyen Kale, and Sanjiv Kumar,
\newblock ``On the convergence of adam and beyond,''
\newblock in {\em International Conference on Learning Representations}, 2018.

\bibitem{nesterov1983method}
Yurii~E Nesterov,
\newblock ``A method for solving the convex programming problem with
  convergence rate o (1/k\^{} 2),''
\newblock in {\em Dokl. akad. nauk Sssr}, 1983, vol. 269, pp. 543--547.

\bibitem{polyak1964some}
Boris~T Polyak,
\newblock ``Some methods of speeding up the convergence of iteration methods,''
\newblock {\em USSR Computational Mathematics and Mathematical Physics}, vol.
  4, no. 5, pp. 1--17, 1964.

\bibitem{chen2018convergence}
Xiangyi Chen, Sijia Liu, Ruoyu Sun, and Mingyi Hong,
\newblock ``On the convergence of a class of adam-type algorithms for
  non-convex optimization,''
\newblock {\em arXiv preprint arXiv:1808.02941}, 2018.

\bibitem{zaheer2018adaptive}
Manzil Zaheer, Sashank Reddi, Devendra Sachan, Satyen Kale, and Sanjiv Kumar,
\newblock ``Adaptive methods for nonconvex optimization,''
\newblock in {\em Advances in Neural Information Processing Systems}, 2018, pp.
  9815--9825.

\bibitem{nazari2019dadam}
Parvin Nazari, Davoud~Ataee Tarzanagh, and George Michailidis,
\newblock ``Dadam: A consensus-based distributed adaptive gradient method for
  online optimization,''
\newblock {\em arXiv preprint arXiv:1901.09109}, 2019.

\bibitem{nazari2020adaptive}
Parvin Nazari, Davoud~Ataee Tarzanagh, and George Michailidis,
\newblock ``Adaptive first-and zeroth-order methods for weakly convex
  stochastic optimization problems,''
\newblock {\em arXiv preprint arXiv:2005.09261}, 2020.

\bibitem{liu2019towards}
Mingrui Liu, Youssef Mroueh, Jerret Ross, Wei Zhang, Xiaodong Cui, Payel Das,
  and Tianbao Yang,
\newblock ``Towards better understanding of adaptive gradient algorithms in
  generative adversarial nets,''
\newblock {\em arXiv preprint arXiv:1912.11940}, 2019.

\bibitem{goodfellow2014generative}
Ian~J. Goodfellow, Jean Pouget-Abadie, Mehdi Mirza, Bing Xu, David
  Warde-Farley, Sherjil Ozair, Aaron Courville, and Yoshua Bengio,
\newblock ``Generative adversarial nets,''
\newblock in {\em Advances in neural information processing systems}, 2014, pp.
  2672--2680.

\bibitem{arjovsky2017wasserstein}
Martin Arjovsky, Soumith Chintala, and L{\'e}on Bottou,
\newblock ``Wasserstein gan,''
\newblock {\em arXiv preprint arXiv:1701.07875}, 2017.

\bibitem{barazandeh2019training}
Babak Barazandeh, Meisam Razaviyayn, and Maziar Sanjabi,
\newblock ``Training generative networks using random discriminators,''
\newblock in {\em 2019 IEEE Data Science Workshop (DSW)}. IEEE, 2019, pp.
  327--332.

\bibitem{barazandeh2018behavior}
Babak Barazandeh and Meisam Razaviyayn,
\newblock ``On the behavior of the expectation-maximization algorithm for
  mixture models,''
\newblock in {\em 2018 IEEE Global Conference on Signal and Information
  Processing (GlobalSIP)}. IEEE, 2018, pp. 61--65.

\bibitem{jin2019minmax}
Chi Jin, Praneeth Netrapalli, and Michael~I. Jordan,
\newblock ``Minmax optimization: Stable limit points of gradient descent ascent
  are locally optimal,''
\newblock {\em arXiv preprint arXiv:1902.00618}, 2019.

\bibitem{gidel2017frank}
Gauthier Gidel, Tony Jebara, and Simon Lacoste-Julien,
\newblock ``Frank-wolfe algorithms for saddle point problems,''
\newblock in {\em Artificial Intelligence and Statistics}, 2017, pp. 362--371.

\bibitem{hamedani2018iteration}
Erfan~Y. Hamedani, Afrooz Jalilzadeh, Necdet~S. Aybat, and Uday~V. Shanbhag,
\newblock ``Iteration complexity of randomized primal-dual methods for
  convex-concave saddle point problems,''
\newblock in {\em arXiv preprint}, 2018, arXiv:1806.04118.

\bibitem{daskalakis2017training}
Constantinos Daskalakis, Andrew Ilyas, Vasilis Syrgkanis, and Haoyang Zeng,
\newblock ``Training gans with optimism,''
\newblock {\em arXiv preprint arXiv:1711.00141}, 2017.

\bibitem{farnia2020gans}
Farzan Farnia and Asuman Ozdaglar,
\newblock ``Gans may have no nash equilibria,''
\newblock {\em arXiv preprint arXiv:2002.09124}, 2020.

\bibitem{nouiehed2019solving}
Maher Nouiehed, Maziar Sanjabi, Tianjian Huang, Jason~D. Lee, and Meisam
  Razaviyayn,
\newblock ``Solving a class of non-convex min-max games using iterative first
  order methods,''
\newblock in {\em Advances in Neural Information Processing Systems}, 2019, pp.
  14905--14916.

\bibitem{barazandeh2020solving}
Babak Barazandeh and Meisam Razaviyayn,
\newblock ``Solving non-convex non-differentiable min-max games using proximal
  gradient method,''
\newblock in {\em ICASSP 2020-2020 IEEE International Conference on Acoustics,
  Speech and Signal Processing (ICASSP)}. IEEE, 2020, pp. 3162--3166.

\bibitem{pang2016unified}
Jong-Shi Pang and Meisam Razaviyayn,
\newblock ``A unified distributed algorithm for non-cooperative games,''
\newblock in {\em Big Data over Networks}, 2016, Cambridge University Press.

\bibitem{pang2011nonconvex}
Jong-Shi Pang and Gesualdo Scutari,
\newblock ``Nonconvex games with side constraints,''
\newblock {\em SIAM Journal on Optimization}, vol. 21, no. 4, pp. 1491--1522,
  2011.

\bibitem{iusem2017extragradient}
Alfredo~N. Iusem, Alejandro Jofr{\'e}, Roberto~I. Oliveira, , and Philip
  Thompson,
\newblock ``Extragradient method with variance reduction for stochastic
  variational inequalities,''
\newblock {\em SIAM Journal on Optimization}, vol. 27, no. 2, pp. 686--724,
  2017.

\bibitem{lin2019gradient}
Tianyi Lin, Chi Jin, and Michael~I Jordan,
\newblock ``On gradient descent ascent for nonconvex-concave minimax
  problems,''
\newblock {\em arXiv preprint arXiv:1906.00331}, 2019.

\bibitem{gower2019sgd}
Robert~Mansel Gower, Nicolas Loizou, Xun Qian, Alibek Sailanbayev, Egor
  Shulgin, and Peter Richt{\'a}rik,
\newblock ``Sgd: General analysis and improved rates,''
\newblock in {\em International Conference on Machine Learning}. PMLR, 2019,
  pp. 5200--5209.

\bibitem{nesterov1998introductory}
Y.~Nesterov,
\newblock ``Introductory lectures on convex programming volume i: Basic
  course,''
\newblock in {\em Lecture notes}, 1998, vol.~3, p.~5.

\bibitem{nesterov2013gradient}
Y.~Nesterov,
\newblock ``Gradient methods for minimizing composite functions,''
\newblock {\em Mathematical Programming}, vol. 140, pp. 125--161, 2013.

\bibitem{razaviyayn2020nonconvex}
Meisam Razaviyayn, Tianjian Huang, Songtao Lu, Maher Nouiehed, Maziar Sanjabi,
  and Mingyi Hong,
\newblock ``Nonconvex min-max optimization: Applications, challenges, and
  recent theoretical advances,''
\newblock {\em IEEE Signal Processing Magazine}, vol. 37, no. 5, pp. 55--66,
  2020.

\bibitem{mertikopoulos2019optimistic}
P.~Mertikopoulos, H.~Zenati, B.~Lecouat, C.S. Foo, V.~Chandrasekhar, and
  G.~Piliouras,
\newblock ``Optimistic mirror descent in saddle-point problems: Going the extra
  (gradient) mile,''
\newblock in {\em ICLR'19-International Conference on Learning
  Representations}, 2019.

\bibitem{liu2019decentralized}
Mingrui Liu, Youssef Mroueh, Wei Zhang, Xiaodong Cui, Tianbao Yang, and Payel
  Das,
\newblock ``Decentralized parallel algorithm for training generative
  adversarial nets,''
\newblock {\em arXiv preprint arXiv:1910.12999}, 2019.

\bibitem{tan2020improved}
Hongwei Tan, Linyong Zhou, Guodong Wang, and Zili Zhang,
\newblock ``Improved performance of gans via integrating gradient penalty with
  spectral normalization,''
\newblock in {\em International Conference on Knowledge Science, Engineering
  and Management}. Springer, 2020, pp. 414--426.

\bibitem{karras2017progressive}
Tero Karras, Timo Aila, Samuli Laine, and Jaakko Lehtinen,
\newblock ``Progressive growing of gans for improved quality, stability, and
  variation,''
\newblock {\em arXiv preprint arXiv:1710.10196}, 2017.

\bibitem{kurach2019large}
Karol Kurach, Mario Lu{\v{c}}i{\'c}, Xiaohua Zhai, Marcin Michalski, and
  Sylvain Gelly,
\newblock ``A large-scale study on regularization and normalization in gans,''
\newblock in {\em International Conference on Machine Learning}. PMLR, 2019,
  pp. 3581--3590.

\bibitem{tran2019convergence}
Tran~Thi Phuong and Trieu Le~Phong,
\newblock ``On the convergence proof of amsgrad and a new version,''
\newblock {\em arXiv e-prints}, pp. arXiv--1904, 2019.

\bibitem{thekumparampil2019efficient}
Kiran~K Thekumparampil, Prateek Jain, Praneeth Netrapalli, and Sewoong Oh,
\newblock ``Efficient algorithms for smooth minimax optimization,''
\newblock in {\em Advances in Neural Information Processing Systems}, 2019, pp.
  12680--12691.

\bibitem{ostrovskii2020efficient}
Dmitrii~M. Ostrovskii, Andrew Lowy, and Meisam Razaviyayn,
\newblock ``Efficient search of first-order nash equilibria in
  nonconvex-concave smooth min-max problems,''
\newblock {\em arXiv preprint arXiv:2002.07919}, 2020.

\bibitem{salimans2016improved}
Tim Salimans, Ian Goodfellow, Wojciech Zaremba, Vicki Cheung, Alec Radford, and
  Xi~Chen,
\newblock ``Improved techniques for training gans,''
\newblock in {\em Advances in neural information processing systems}, 2016, pp.
  2234--2242.

\end{thebibliography}
\normalsize
\onecolumn
\section*{Appendix}
In this section, we provide proofs for Theorem~\ref{thm:main} and required auxiliary lemmas.
% \begin{rmk}\label{lem:hadamard} 
% For any set of vectors $\{\textbf{a}_i\}_{i = 1}^M, \textbf{b}$ and $\textbf{c}$ in $\mathbb{R}^d$ we have 
% \begin{enumerate}[1.]
% \begin{inparaenum}[1.]
% \item \label{itm:one}$\|\sum\limits_{i = 1}^M \textbf{a}_{i}\|^2\leq M \sum\limits_{i = 1}^M \|\textbf{a}_i\|^2$\qquad
%  \item \label{itm:seven} $\|\textbf{a} \circ \textbf{b}\| \leq \|\textbf{a}\|_{\infty} \|\textbf{b}\|_{1}$\qquad
%  \item \label{itm:five} $\|\textbf{b} \circ \textbf{c}\| \leq \|\textbf{b}\|_{\infty} \|\textbf{c}\|$
% \end{inparaenum}
% \end{enumerate}
% \end{rmk}
\begin{rmk}\label{lem:hadamard} 
For any set of vectors $\{\textbf{a}_i\}_{i = 1}^M, \textbf{b}$ and $\textbf{c}$ in $\mathbb{R}^d$ we have 
$$1.\|\sum\limits_{i = 1}^M \textbf{a}_{i}\|^2\leq M \sum\limits_{i = 1}^M \|\textbf{a}_i\|^2, \qquad 2.\|\textbf{a} \circ \textbf{b}\| \leq \|\textbf{a}\|_{\infty} \|\textbf{b}\|_{1}, \qquad 3. \|\textbf{b} \circ \textbf{c}\| \leq \|\textbf{b}\|_{\infty} \|\textbf{c}\|.$$
\end{rmk}

\textbf{Lemma~\ref{VM_bound}.}~\textit{Assume that $\gamma := \beta_{1,1}/\beta_2 \leq 1$ in Algorithm~\ref{alg:0}. Then, for each $k \in \{1 \cdots N \}$ we have
\begin{align*}
\|\tilde{v}_{k}^{-\frac{1}{2}} \circ m_{k-1}\| \leq \sqrt{\frac{d}{u_c}}, 
\end{align*}
where $u_c := (1-\beta_3)(1-\beta_{1,1}) (1-\beta_2) (1-\gamma). $}
\begin{proof}
For each $k \in \{1 \cdots N \}$ and $r \in \{1, \cdots, d\}$, let $\tilde{v}_{r, k}^{-\frac{1}{2}}$ and $m_{r,k}$ represent the values of the $r^{th}$ 
coordinate of vectors $\sVtori_{k}^{-\frac{1}{2}}$ and $\m_{k}$, respectively. Then, from the update rule of Algorithm~\ref{alg:0} we have
$$\tilde{v}_{r,k} = \beta_3 \tilde{v}_{r,k-1} + (1-\beta_{3})\max(\tilde{v}_{r,k-1}, v_{r,k}),$$ 
which implies that $ \tilde{v}_{r,k} \geq (1-\beta_{3}) v_{r,k}$. Besides, it can be easily seen from the update rule of $\m_{k}$ and $ \wi_{k}$ in Algorithm~\ref{alg:0} that 
\begin{align*}
     m_{r,k} = \sum_{s = 1}^k \left( \prod\limits_{l = s+1}^k \beta_{1,l} \right)  (1-\beta_{1,s}) \hat{g}_{r,s}, \quad \text{and} \quad  v_{r,k}   = (1-\beta_{2})\sum_{s = 1}^k \beta_{2}^{k-s}\hat{g}_{r,s}^2. 
\end{align*}
Thus,
\begin{align}\label{eqn:bvb}
\nonumber
|v_{r,k}^{-\frac{1}{2}} m_{r,k-1}|^2 \leq |v_{r,k-1}^{-\frac{1}{2}} m_{r,k-1}|^2 &\leq    \frac{\left(\sum\limits_{s = 1}^{k-1} \left( \prod\limits_{l = s+1}^{k-1} \beta_{1,l} \right)  (1-\beta_{1,s}) \hat{g}_{r,s}\right)^2 }{(1-\beta_2)\sum\limits_{s = 1}^{k-1} \beta_{2}^{k-s-1} \hat{g}_{r,s}^2} 
\\
& \leq   \frac{\left(\sum\limits_{s = 1}^{k-1} \left( \prod\limits_{l = s+1}^{k-1} \beta_{1,l} \right)  \hat{g}_{r,s}\right)^2 }{(1-\beta_2)\sum\limits_{s = 1}^{k-1} \beta_{2}^{k-s-1} \hat{g}_{r,s}^2},
\end{align}
where the first inequality follows since ${v}_{r,k}^{-\frac{1}{2}} \leq {v}_{r,k-1}^{-\frac{1}{2}}$ for all $r \in [d]$ and the last inequality uses  our assumption that $\beta_{1,s} \leq 1$ for all $s \geq 1$.

Now, let $\pi_s =  \prod\limits_{l = s+1}^{k-1} \beta_{1,l}$. Since $\beta_{1,l}$ is decreasing, we get $\pi_s \leq \beta_{1,1}^{k-s-1}$. This, together with $(\sum_{i} a_i b_i)^2 \leq (\sum_{i} a_i^2) (\sum_{i} b_i^2)$ implies that  
\begin{align*}
\frac{\left(\sum\limits_{s = 1}^{k-1}  \pi_s  \hat{g}_{r,s}\right)^2 }{(1-\beta_2)\sum\limits_{s = 1}^{k-1} \beta_{2}^{k-s-1}\hat{g}_{r,s}^2} & \leq \frac{(\sum\limits_{s = 1}^{k-1} \pi_s)(\sum\limits_{s = 1}^{k-1} \pi_s  \hat{g}_{r,s}^2 )}{(1-\beta_2)\sum\limits_{s = 1}^{k-1} \beta_{2}^{k-s-1}\hat{g}_{r,s}^2}
\\
&\leq \frac{1}{1-\beta_2} (\sum_{s = 1}^{k-1}  \pi_s)\left(\sum_{s = 1}^{k-1} \frac{\pi_s  \hat{g}_{r,s}^2}{\beta_{2}^{k-s-1}\hat{g}_{r,s}^2}\right) \\
& \leq \frac{1}{1-\beta_2}  (\sum_{s = 1}^{k-1}  \pi_s)\sum_{s = 1}^{k-1} \frac{\pi_s}{\beta_{2}^{k-s-1}} 
\\
&\leq \frac{1}{1-\beta_2}\frac{1}{1-\beta_{1,1}} \frac{1}{1- \gamma}, 
\end{align*}
where the last inequality follows from our assumption $\gamma= \frac{\beta_{1,1}}{\beta_2} \leq 1$. Finally, substituting the above inequality into \eqref{eqn:bvb} yields the desired result.
\end{proof}

\begin{lemma}\label{lem:v_diff}
For each $k \in \{1 \cdots N \}$ and $r \in \{1, \cdots, d\}$, let $\tilde{v}_{r, k}$ represent the value of the $r^{th}$ coordinate of vector $\sVtori_{k}$. Then, for the sequence of $\sVtori_k$'s generated by Algorithm~\ref{alg:0} we have
\begin{enumerate}
\item \label{itm:one_Vp} $\sum\limits_{k=1}^{N} \|\sVtori_{k}^{p} - \sVtori_{k-1}^{p} \|_{1} \leq \sum_{r=1}^d \max \left( \tilde{v}_{r,0}^{p} , \tilde{v}_{r,N}^{p} \right)$ \quad  \text{and}
\item \label{itm:two_Vp} $ \sum\limits_{k=1}^{N} \|\sVtori_{k}^{p} - \sVtori_{k-1}^{p} \|_{1}^2 \leq \sum_{r=1}^d  \tilde{v}_{r,0}^{p} \max \left(\tilde{v}_{r,0}^{p} , \tilde{v}_{r,N}^{p} \right)$
\end{enumerate}
where  $p \in \mathbb{R}$ and the vector powers are considered to be element-wise.
\end{lemma}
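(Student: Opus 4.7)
The plan is to reduce both claims to the fact that each coordinate sequence $\{\tilde v_{r,k}\}_k$ is monotone, and then telescope.

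First I would establish monotonicity directly from the update rule. Since $\max(\tilde v_{r,k-1}, v_{r,k}) \ge \tilde v_{r,k-1}$ and $\beta_3\in[0,1)$, the recursion $\tilde v_{r,k} = \beta_3\tilde v_{r,k-1} + (1-\beta_3)\max(\tilde v_{r,k-1},v_{r,k})$ yields $\tilde v_{r,k} \ge \tilde v_{r,k-1}$ for every $r$ and $k$. Consequently $\{\tilde v_{r,k}^{p}\}_k$ is non-decreasing when $p\ge 0$ and non-increasing when $p<0$, so the sign of $\tilde v_{r,k}^{p}-\tilde v_{r,k-1}^{p}$ is fixed in $k$ for each coordinate $r$.

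For part~\ref{itm:one_Vp}, I would expand $\|\cdot\|_1=\sum_r|\cdot|$, swap the two sums (justified by the single-sign structure above), and apply the coordinatewise telescoping
\[
\sum_{k=1}^N |\tilde v_{r,k}^{p}-\tilde v_{r,k-1}^{p}| \;=\; |\tilde v_{r,N}^{p}-\tilde v_{r,0}^{p}| \;\le\; \max(\tilde v_{r,0}^{p},\tilde v_{r,N}^{p}),
\]
followed by a sum over $r\in\{1,\ldots,d\}$. For part~\ref{itm:two_Vp}, the main tool is the elementary observation $a^2\le M\cdot a$ whenever $0\le a\le M$. Coordinatewise, each increment $|\tilde v_{r,k}^{p}-\tilde v_{r,k-1}^{p}|$ is bounded by the total displacement $|\tilde v_{r,N}^{p}-\tilde v_{r,0}^{p}|$, and hence by $\max(\tilde v_{r,0}^{p},\tilde v_{r,N}^{p})$; in the regime $p=-1/2$ used throughout the convergence analysis, this maximum in fact equals $\tilde v_{r,0}^{p}$. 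Taking $M=\tilde v_{r,0}^{p}$, squaring an increment becomes at most $\tilde v_{r,0}^{p}$ times the increment itself, so the remaining $\sum_k$ telescopes to $\tilde v_{r,0}^{p}\,|\tilde v_{r,N}^{p}-\tilde v_{r,0}^{p}|\le \tilde v_{r,0}^{p}\max(\tilde v_{r,0}^{p},\tilde v_{r,N}^{p})$ per coordinate, and summing over $r$ yields the claim.

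The main obstacle will be the mismatch between the squared one-norm and the coordinatewise telescoping picture: $\|\cdot\|_1^2$ does not distribute across entries, so without the constant-sign property above one cannot cleanly pull $\sum_k$ inside $\|\cdot\|_1$. Exploiting monotonicity is exactly what lets one re-express $\sum_k\|\tilde v_k^{p}-\tilde v_{k-1}^{p}\|_1^2$ as a coordinatewise sum of telescoping series; once this reduction is in place, everything else is routine algebraic bookkeeping.
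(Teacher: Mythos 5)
Your treatment of part~\ref{itm:one_Vp} is correct and is exactly the paper's argument: each coordinate of $\tilde v_{k}$ is non-decreasing in $k$, so $\tilde v_{r,k}^{p}-\tilde v_{r,k-1}^{p}$ has a fixed sign in $k$, the $\ell_1$ norm distributes over coordinates, and the sum over $k$ telescopes to $|\tilde v_{r,N}^{p}-\tilde v_{r,0}^{p}|\le\max(\tilde v_{r,0}^{p},\tilde v_{r,N}^{p})$ per coordinate.

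For part~\ref{itm:two_Vp}, the step you yourself single out as the main obstacle is not actually resolved by monotonicity. Writing $a_{r,k}:=\tilde v_{r,k}^{p}-\tilde v_{r,k-1}^{p}$, you have $\|\tilde v_{k}^{p}-\tilde v_{k-1}^{p}\|_1^2=\big(\sum_{r}|a_{r,k}|\big)^2=\sum_{r}a_{r,k}^2+\sum_{r\neq r'}|a_{r,k}|\,|a_{r',k}|$, and the cross terms are nonnegative; monotonicity concerns the $k$-direction and says nothing about them. Your telescoping device ($a^2\le M a$ for $0\le a\le M$, then sum over $k$) correctly controls only the diagonal piece $\sum_{k}\sum_{r}a_{r,k}^2$, so the claimed re-expression of $\sum_{k}\|\cdot\|_1^2$ as a coordinatewise telescoping sum is a genuine gap. (The paper's own proof of part~\ref{itm:two_Vp} makes the same silent leap in its first displayed inequality, so you have faithfully reproduced its argument, gap included.) A clean repair in the one case the analysis actually uses ($p=-\tfrac12$) is to bound one factor of the square uniformly, $\|\tilde v_{k}^{p}-\tilde v_{k-1}^{p}\|_1\le\sum_{r}\tilde v_{r,0}^{p}$ since $0\le -a_{r,k}\le\tilde v_{r,0}^{p}$ for $p<0$, and then apply part~\ref{itm:one_Vp} to the remaining factor; this yields $\big(\sum_{r}\tilde v_{r,0}^{p}\big)^2$, weaker than the stated bound by a factor of $d$ but sufficient for the rate in Theorem~\ref{thm:main}. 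Separately, your choice $M=\tilde v_{r,0}^{p}$ is a valid bound on the increments only when $p\le 0$; for $p>0$ the increments are bounded by $\tilde v_{r,N}^{p}$, and the telescoped quantity $\tilde v_{r,N}^{p}(\tilde v_{r,N}^{p}-\tilde v_{r,0}^{p})$ can exceed $\tilde v_{r,0}^{p}\tilde v_{r,N}^{p}$, so the coordinatewise claim for general $p\in\mathbb{R}$ does not follow as written either.
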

\begin{proof}
\ref{itm:one_Vp}.~If $p>0$, from the update rule of $\sVtori_{k}$ in Algorithm~\ref{alg:0} we have 
\begin{align*}
\nonumber
\sum_{k=1}^{N}  \left\Vert \sVtori_{k}^{p} - \sVtori_{k-1}^{p}\right\Vert_1  =  \sum_{k=1}^{N}  \sum_{r=1}^d  (\tilde{v}_{r,k}^{p} - \tilde{v}_{r,k-1}^{p})  & = \sum_{r=1}^d \sum_{k=1}^{N}  (\tilde{v}_{r,k}^{p} - \tilde{v}_{r, k-1}^{p}) 
\\
& \leq   \sum_{r=1}^{d}  \tilde{v}_{r,N}^{p} , 
\end{align*}
where the first equality is due to the fact that for $p >0 $, each element of $\sVtori_k^p$ is increasing in $k$ and the last inequality uses the telescoping sum. Now, we consider the case when $p<0$. It can be easily seen that 
\begin{align*}
\nonumber
 \sum_{k=1}^{N} \left\Vert \sVtori_{k}^{p} - \sVtori_{k-1}^{p}\right\Vert_1  &= \sum_{k=1}^{N}  \sum_{r=1}^d  (-\tilde{v}_{r,k}^{q} + \tilde{v}_{r,k-1}^{q}) 
 \leq   \sum_{r=1}^{d}  \tilde{v}_{r,0}^{p}.
\end{align*}
\\
\ref{itm:two_Vp}.~For $p>0$, it follows that 
\begin{align*}
\nonumber
\sum_{k=1}^{N}  \left\Vert \sVtori_{i,k}^{p} - \sVtori_{i,k-1}^{p}\right\Vert_1^2 \leq 
 \sum_{k=1}^{N}   \sum_{r=1}^d \left(\tilde{v}_{r,k}^{p} - \tilde{v}_{r,k-1}^{p}\right) \tilde{v}_{r,k}^{p}  & \leq   \sum_{k=1}^{N}  \sum_{r=1}^d \left(\tilde{v}_{r,k}^{p} - \tilde{v}_{r,k-1}^{p}\right) \tilde{v}_{r,N}^{p} \\
  & \leq \sum_{r=1}^{d}  \left(\tilde{v}_{r,0}^{p} - \tilde{v}_{r,N}^{p}\right)  \tilde{v}_{r,N}^{p}  \\
& \leq \sum_{r=1}^{d}  \tilde{v}_{r,0}^{p} \tilde{v}_{r,N}^{p}.  
\end{align*}
Now, we consider the case when $p<0$. It can be easily seen that 
\begin{align*}
\nonumber 
\sum_{k=1}^{N}  \left\Vert \sVtori_{k}^{p} - \sVtori_{k-1}^{p}\right\Vert_1^2  \leq 
 \sum_{k=1}^{N}   \sum_{r=1}^d (-\tilde{v}_{r,k}^{p} + \tilde{v}_{r,k-1}^{p}) (\tilde{v}_{r,k-1}^{p}) & \leq   \sum_{k=1}^{N}  \sum_{r=1}^d (-\tilde{v}_{r,k}^{p} + \tilde{v}_{r,k-1}^{p}) \tilde{v}_{r,0}^{p} \\
& \leq    \sum_{r=1}^{d}  \tilde{v}_{r,0}^{p} \tilde{v}_{r,0}^{p}.  
\end{align*}
\end{proof}

\begin{lemma}\label{lem:teles}
Under Assumptions~\ref{assumption:function} and~\ref{assumption:bounded-space} we have
\begin{align*}
\sum_{k = 1}^N \left(\left\|\sVtori_{k-1}^{\frac{1}{4}} \circ (\x_{k-1}-\x_*)\right\|^2 - \left\|\sVtori_{k-1}^{\frac{1}{4}} \circ (\x_{k}-\x_*)\right\|^2\right) \leq 3 D^2 dG_{\infty}.
\end{align*}
\end{lemma}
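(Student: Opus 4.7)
The sum is almost telescoping, but the weight vector $\sVtori_{k-1}^{1/4}$ acts on both $\x_{k-1}-\x_*$ and $\x_{k}-\x_*$ in the same summand, so one cannot telescope directly. My plan is to introduce the "missing" quantity $\|\sVtori_{k}^{1/4}\circ(\x_k-\x_*)\|^2$ and split
\[
\|\sVtori_{k-1}^{1/4}\!\circ\!(\x_{k-1}-\x_*)\|^2 - \|\sVtori_{k-1}^{1/4}\!\circ\!(\x_{k}-\x_*)\|^2
= \underbrace{\|\sVtori_{k-1}^{1/4}\!\circ\!(\x_{k-1}-\x_*)\|^2 - \|\sVtori_{k}^{1/4}\!\circ\!(\x_{k}-\x_*)\|^2}_{(\mathrm{I}_k)} + \underbrace{\|\sVtori_{k}^{1/4}\!\circ\!(\x_{k}-\x_*)\|^2 - \|\sVtori_{k-1}^{1/4}\!\circ\!(\x_k-\x_*)\|^2}_{(\mathrm{II}_k)}.
\]
The sum $\sum_k (\mathrm{I}_k)$ telescopes cleanly and the sum $\sum_k (\mathrm{II}_k)$ is a weighted sum of single-step increments of the (element-wise) non-decreasing sequence $\sVtori_k^{1/2}$.

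For $\sum_k(\mathrm{I}_k)$, telescoping yields $\|\sVtori_0^{1/4}\circ(\x_0-\x_*)\|^2 - \|\sVtori_N^{1/4}\circ(\x_N-\x_*)\|^2$, which is at most $\|\sVtori_0^{1/2}\|_\infty\,\|\x_0-\x_*\|^2 \le G_\infty D^2$ by Assumption~\ref{assumption:bounded-space} and Lemma~\ref{lem:useful_bounds}. For $\sum_k(\mathrm{II}_k)$, I would expand coordinate-wise as $\sum_k\sum_r(\tilde v_{r,k}^{1/2}-\tilde v_{r,k-1}^{1/2})(x_{r,k}-x_{r,*})^2$; the update rule $\tilde v_{r,k}=\beta_3\tilde v_{r,k-1}+(1-\beta_3)\max(\tilde v_{r,k-1},v_{r,k})$ makes each increment non-negative, and Assumption~\ref{assumption:bounded-space} gives $|x_{r,k}-x_{r,*}|\le\|\x_k\|+\|\x_*\|\le D$ coordinate-wise, so this sum is bounded by $D^2\sum_r\sum_k(\tilde v_{r,k}^{1/2}-\tilde v_{r,k-1}^{1/2})$. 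Applying Lemma~\ref{lem:v_diff}(\ref{itm:one_Vp}) with $p=1/2$ and Lemma~\ref{lem:useful_bounds} then gives $D^2\sum_r \tilde v_{r,N}^{1/2}\le D^2 d\,G_\infty$.

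Adding the two pieces yields $G_\infty D^2 + d\,G_\infty D^2 \le 2 d\,G_\infty D^2 \le 3 d\,G_\infty D^2$, which is the claimed bound with some slack. The only mildly delicate point is the coordinate-wise bound $|x_{r,k}-x_{r,*}|\le D$, which follows because $\|\cdot\|_\infty\le\|\cdot\|_2$ together with $\|\x_k\|,\|\x_*\|\le D/2$; the rest is bookkeeping. I do not expect any serious obstacle — the main idea is the add-and-subtract that converts a non-telescoping expression into a telescoping part plus a monotone-increment part controllable through Lemma~\ref{lem:v_diff}.
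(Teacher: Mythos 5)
Your proposal is correct and follows essentially the same route as the paper: the paper's proof performs exactly your add-and-subtract regrouping (pairing $-\|\sVtori_{s-1}^{1/4}\circ(\x_s-\x_*)\|^2$ with $+\|\sVtori_{s}^{1/4}\circ(\x_s-\x_*)\|^2$), telescopes the remaining part, and controls the increment terms through the monotonicity of $\sVtori_k$ via Lemma~\ref{lem:v_diff}. The only cosmetic difference is that you bound each increment coordinate-wise as $D^2\|\sVtori_k^{1/2}-\sVtori_{k-1}^{1/2}\|_1$ while the paper factors the difference of squared norms and applies Lemma~\ref{lem:v_diff} with $p=1/4$; your variant even yields the marginally sharper constant $2D^2dG_\infty$.
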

\begin{proof}
Observe that 
\begin{align}\label{sumd2_0}
\nonumber
& \quad \sum_{k = 1}^N \left(\left\|\sVtori_{k-1}^{\frac{1}{4}} \circ (\x_{k-1}-\x_*)\right\|^2 - \left\|\sVtori_{k-1}^{\frac{1}{4}} \circ (\x_{k}-\x_*)\right\|^2\right)
\\ \nonumber
& =  \left\|\sVtori_{0}^{\frac{1}{4}} \circ (\x_{0}-\x_*)\right\|^2 +  \left( -\left\|\sVtori_{0}^{\frac{1}{4}} \circ (\x_{1}-\x_*)\right\|^2 + \left\|\sVtori_{1}^{\frac{1}{4}} \circ (\x_{1}-\x_*)\right\|^2\right)
\\ \nonumber
& + \left( -\left\|\sVtori_{1}^{\frac{1}{4}} \circ (\x_{2}-\x_*)\right\|^2 + \left\|\sVtori_{2}^{\frac{1}{4}} \circ (\x_{2}-\x_*)\right\|^2\right)
\\ \nonumber
& \qquad \vdots
\\ 
& + \left( -\left\|\sVtori_{N-2}^{\frac{1}{4}} \circ (\x_{N-1}-\x_*)\right\|^2 + \left\|\sVtori_{N-1}^{\frac{1}{4}} \circ (\x_{N-1}-\x_*)\right\|^2\right)  -\left\|\sVtori_{N-1}^{\frac{1}{4}} \circ (\x_{N}-\x_*)\right\|^2.
\end{align}
For arbitrary $s^{th}$ pairs in~\eqref{sumd2_0}, we have
\begin{align}\label{eq:sum_temp}
\nonumber
& -\left\|\sVtori_{s-1}^{\frac{1}{4}} \circ (\x_{s}-\x_*)\right\|^2 + \left\|\sVtori_{s}^{\frac{1}{4}} \circ (\x_{s}-\x_*)\right\|^2 
\\ \nonumber
&=  \left(-\left\|(\sVtori_{s-1}^{\frac{1}{4}}-\sVtori_{s}^{\frac{1}{4}} + \sVtori_{s}^{\frac{1}{4}}) \circ (\x_{s}-\x_*)\right\| + \left\|\sVtori_{s}^{\frac{1}{4}} \circ (\x_{s}-\x_*)\right\|\right) \\
\nonumber
& \cdot \left(\left\|\sVtori_{s-1}^{\frac{1}{4}} \circ (\x_{s}-\x_*)\right\| + \left\|\sVtori_{s}^{\frac{1}{4}} \circ (\x_{s}-\x_*)\right\|\right) 
\\ \nonumber
& \leq   \left(\left\|(\sVtori_{s-1}^{\frac{1}{4}}-\sVtori_{s}^{\frac{1}{4}} ) \circ (\x_{s}-\x_*)\right\| \right) 2\sqrt{G_{\infty}} D
\\
& \leq 2D^2\sqrt{G_{\infty}}\| \sVtori_{s-1}^{\frac{1}{4}}-\sVtori_{s}^{\frac{1}{4}}\|_1,
\end{align}
where the first inequality follows from $\|\textbf{a}\|-\|\textbf{b}\|\leq \|\textbf{a}-\textbf{b}\|$, Assumption~\ref{assumption:bounded-space} and Lemma~\ref{lem:useful_bounds}.

As a result, 
\begin{align}
\nonumber
\sum_{k = 1}^N \left(\left\|\sVtori_{k-1}^{\frac{1}{4}} \circ (\x_{k-1}-\x_*)\right\|^2 - \left\|\sVtori_{k-1}^{\frac{1}{4}} \circ (\x_{k}-\x_*)\right\|^2\right) & \leq 2D^2 dG_{\infty}+\left\|\sVtori_{0}^{\frac{1}{4}} \circ (\x_{0}-\x_*)\right\|^2 -\left\|\sVtori_{N-1}^{\frac{1}{4}} \circ (\x_{N}-\x_*)\right\|^2
\\ \nonumber
& \leq 3 D^2 dG_{\infty},
\end{align}
where the first inequality follows from Lemma~\ref{lem:v_diff} and last inequality uses the same lemma, Assumption~\ref{assumption:bounded-space} and the fact that $d \geq 1$. 
\end{proof}
\textbf{Theorem~\ref{thm:main}}.~\textit{Let Assumptions \ref{assumption:function}--\ref{assumption:bounded-v} hold, and $L$, $G_\infty$, $G_0$, $\sigma$ be defined therein. In Algorithm~\ref{alg:0}, if we choose 
\begin{align*}
&\eta \leq \sqrt{G_0^3/(56L^2G_{\infty})}, \quad \textnormal{and} \quad \beta_{1,1} \leq \frac{\sqrt{C}}{\sqrt{C} + 1},
\end{align*}
where $ C = \frac{(1+\kappa)\kappa^2 G_0^3}{168(1-\kappa) G^3_{\infty}}$, then 
\begin{align}\label{eqn:upgrad}
\frac{1}{N}\sum_{k = 1}^N \E \|\nabla F (\z_k)\|^2 \leq  \frac{C_1}{N}+ \frac{ C_2\sigma^2 }{m},
\end{align}
for some positive constants $C_1$ and $C_2$.}
\begin{proof}
We divide the proof into four steps. In Step 1, we show that the gradient norm is bounded by the norm of search direction and auxiliary variables $\x_k$ and $\z_k$. Then in Steps~2 and~3, we give upper bounds for these terms. Finally, in Step~4, we provide the convergence analysis.
\begin{enumerate}[leftmargin=*]
\item[\textbf{Step 1}] shows that under Assumption~\ref{assumption:function}~(\ref{assumption:g_bounded}), we have
 \begin{align}\label{eq:thm_main0}
\frac{1}{N} \sum_{k=1}^{N}  \|\srg_k\|^2 \leq \frac{3}{ N\eta^2 (1-\beta_{1,1})^2 G_{\infty}^{-2}}  \sum_{k=1}^{N} \eta^2 R_{1,k} + R_{2,k},
\end{align}
where 
\begin{align}\label{eq:r120}
\nonumber 
R_{1,k} &:= \left\| -\sdi_k+(1-\beta_{1,k}) \sVtorip_{k} \circ \srg_k\right\|^2 \\
R_{2,k} &:= \left\|\z_k-\x_k\right\|^2+\left\|\z_k-\x_{k-1}\right\|^2.
\end{align}
It follows from the update rule of $\x_k$ in Algorithm~\ref{alg:0} that 
\begin{align*}
\eta (1-\beta_{1,k})\left(\sVtorip_{k} \circ \srg_k\right)&=\z_k-\x_k+\left(\x_{k-1}-\eta \sdi_k \right)\\
& - \z_k+\eta (1-\beta_{1,k}) \left(\sVtorip_k \circ \srg_k\right). 
\end{align*}
Now, using Remark~\ref{lem:hadamard}, we get 
\begin{align}%\label{eq:r1200}
\nonumber \eta^2 (1-\beta_{1,k})^2 \left\|\sVtorip_{k} \circ \srg_k \right\|^2 &\leq 3\eta^2\left\| -\sdi_k+(1-\beta_{1,k}) \sVtorip_{k} \circ \srg_k\right\|^2\\
&+ 3 \left(\left\|\z_k-\x_k\right\|^2+\left\|\z_k-\x_{k-1}\right\|^2\right).
\end{align}
From Lemma~\ref{lem:useful_bounds}, we have $\|\sVtori_{k}^{-\frac{1}{2}}\|_{\infty} \geq G_{\infty}^{-1}$ which implies that
\begin{align*}
\left\|\sVtori_{k}^{-\frac{1}{2}} \circ \srg_k\right\|^2  \geq G_{\infty}^{-2} \|\srg_k\|^2.  
\end{align*}
Now, it follows from the above inequality and \eqref{eq:r120} that 
 \begin{align}%\label{eq:thm_main0000}
 \nonumber
 \eta^2 (1-\beta_{1,1})^2 G_{\infty}^{-2}\|\srg_k\|^2 & \leq  \eta^2 (1-\beta_{1,k})^2 \left\|\sVtori_{k}^{-\frac{1}{2}} \circ \srg_k\right\|^2 \leq 3 \eta^2 R_{1,k}  + 3R_{2,k},
 \end{align}
which gives \eqref{eq:thm_main0}.

\item[\textbf{Step 2}] establishes an upper bound for $R_{1,k}$ defined in \eqref{eq:r120}. More specifically, we show that 
\begin{align}\label{R1k0}
 \frac{1}{N} \sum_{k=1}^{N} R_{1,k}  &\leq  \frac{2d\beta_{1,1}^2}{Nu_c(1-\kappa^2)} +  \frac{2}{NG_0^2}\sum_{k = 1}^N \|\beps_k\|^2,
\end{align}
where $u_c$ is defined in Lemma~\ref{VM_bound} and $\beps_k = \seg_k - \srg_k$.

From the definition of $\sdi_{k}$ in Algorithm~\ref{alg:0}, we have 
\begin{align}\label{eq:update_D0}
\sdi_{k} & = \beta_{1,k} \sVtorip_{k} \circ \m_{k-1} + (1-\beta_{1,k})\sVtorip_{k} \circ \seg_{k}.
\end{align}
Hence,
$$
-\sdi_k+(1-\beta_{1,k}) \sVtorip_{k} \circ \srg_k =-\beta_{1,k} \sVtorip_{k} \circ \m_{k-1} + (1-\beta_{1,k})\sVtorip_{k} \circ \left(\srg_k-\seg_k\right),$$
which implies that 
\begin{align}\label{R_1_k0}
\nonumber
R_{1,k} & = \left\|-\beta_{1,k} \sVtorip_{k} \circ \m_{k-1} + (1-\beta_{1,k}) \sVtorip_{k}\circ \left( \srg_k-(\srg_k + \beps_k) \right) \right\|^2
\\
&\leq 2 \beta^2_{1,k} \left\|\sVtorip_{k} \circ \m_{k-1}\right\|^2+2 (1-\beta_{1,k})^2 \left\|\sVtorip_k \circ \beps_k \right\|^2.
\end{align}
Here, the equality is obtained since $\beps_k = \seg_k - \srg_k$, and the inequality follows from Remark~\ref{lem:hadamard}. 

For the first term on the R.H.S. of \eqref{R_1_k0}, it follows from Lemma~\ref{VM_bound} that
\begin{subequations}
\begin{align}\label{rd11k0}
&\left\|\sVtorip_{k} \circ \m_{k-1}\right\|^2 \leq \frac{d}{u_c}.
\end{align}
Further, for the second term on the R.H.S. of \eqref{R_1_k0}, we have
\begin{align}\label{rd12k0}
&\left\|\sVtorip_k \circ \beps_k \right\|^2 \leq  \|\sVtorip_k\|_{\infty}^2  \|\beps_{k}\|^2 \leq \|\sVtorip_0\|_{\infty}^2  \|\beps_{k}\|^2 \leq  \frac{1}{ G_0^2} \|\beps_{k}\|^2,
\end{align}
\end{subequations} 
where the inequality uses Remark~\ref{lem:hadamard}, the fact that each element of $\sVtorip_k$ is decreasing in $k$ and our assumption that $\|\tilde{\wi}_{0}^{-\frac{1}{2}}\|_{\infty} \leq  1/G_{0}$.

Substituting \eqref{rd11k0}--\eqref{rd12k0} into \eqref{R_1_k0}, we obtain
\begin{align*}%\label{R1k0}
R_{1,k} \leq \frac{2d\beta_{1,k}^2}{u_c} +  \frac{2}{G_0^2}  \|\beps_{k}\|^2.
\end{align*}
Summing the above inequality over $k=1, \cdots, N$ and using the fact that $\sum_{k=1}^N  \beta_{1,k}^2 \leq \beta_{1,1}^2/ (1-\kappa^2)$, we obtain the desired result.
\item[\textbf{Step 3}] provides an upper bound for $R_{2,k}$ defined in \eqref{eq:r120}. In particular, we show that for $ \eta \leq \sqrt{G_0^3/(56L^2G_{\infty})}$, the following holds   
\begin{align}\label{eq:stp2:result}
\nonumber
  \frac{1}{N} \sum_{k=1}^{N}   R_{2,k}&\leq
 \frac{6 D^2 dG_{\infty}}{NG_0}+ \frac{4 \eta D}{NG_0} \bigg(\frac{\beta_{1,1} G_{\infty}}{1-\kappa} \sqrt{\frac{d}{u_c}}+ \frac{G^2_{\infty}d}{G_0}\bigg) + \frac{56 \eta^2 d \beta_{1,1}^2 G_{\infty}}{Nu_c(1-\kappa^2)G_{0}} \\
 &+  \frac{28 \eta^2d G_{\infty}^3}{N G_0^3} 
 + \frac{28\eta^2G_{\infty}}{N G_0^3}  \sum_{k=1}^{N} (\beta_{1,k} - \beta_{1,k-1})^2 \|\srg_k\|^2 + \frac{56 \eta^2 G_{\infty}}{G_{0}^3N}\sum_{k = 1}^N \|\beps_{k}\|^2. 
\end{align}

Let $$\sVtori_{k-1}^{\frac{1}{4}}:=\left[\hat{v}_{1,k-1}^{\frac{1}{4}}, \hat{v}_{2,k-1}^{\frac{1}{4}}, \cdots, \hat{v}_{d,k-1}^{\frac{1}{4}}\right]^\intercal.
$$ 
The update rule of~$\x_{k}$ in Algorithm~\ref{alg:0} implies that 
\begin{align*}
\nonumber & \quad  \left\|\sVtori_{k-1}^{\frac{1}{4}} \circ (\x_k-\x)\right\|^2 = \left\Vert \sVtori_{k-1}^{\frac{1}{4}} \circ (\x_{k-1} - \eta \sdi_k - \x)\right\Vert^2
\\
& = \left\Vert \sVtori_{k-1}^{\frac{1}{4}} \circ \left( \x_{k-1}- \x\right) - \eta \sVtori_{k-1}^{\frac{1}{4}} \circ \sdi_k \right\Vert^2 - \left\Vert \sVtori_{k-1}^{\frac{1}{4}} \circ \left(\x_{k-1} - \x_{k}\right) - \eta \sVtori_{k-1}^{\frac{1}{4}} \circ \sdi_k  \right\Vert^2
\\
& = \left\|\sVtori_{k-1}^{\frac{1}{4}} \circ (\x_{k-1}-\x)\right\|^2-\left\|\sVtori_{k-1}^{\frac{1}{4}} \circ (\x_{k-1}-\x_k)\right\|^2  
\\
&-2\left\langle \sVtori_{k-1}^{\frac{1}{4}} \circ (\x_{k-1}-\x), \eta  \sVtori_{k-1}^{\frac{1}{4}} \circ \sdi_k \right\rangle + 2\left\langle \sVtori_{k-1}^{\frac{1}{4}} \circ (\x_{k-1} - \x_k), \eta  \sVtori_{k-1}^{\frac{1}{4}} \circ \sdi_k\right\rangle  \\
&-2\left\langle \sVtori_{k-1}^{\frac{1}{4}} \circ \z_k, \eta  \sVtori_{k-1}^{\frac{1}{4}} \circ \sdi_k\right\rangle + 2\left\langle \sVtori_{k-1}^{\frac{1}{4}} \circ \z_k, \eta  \sVtori_{k-1}^{\frac{1}{4}} \circ \sdi_k\right\rangle
\\
& = \left\|\sVtori_{k-1}^{\frac{1}{4}} \circ (\x_{k-1}-\x)\right\|^2-\left\|\sVtori_{k-1}^{\frac{1}{4}} \circ (\x_{k-1} -\z_k+ \z_k-\x_k)\right\|^2 \\
& -2\left\langle \sVtori_{k-1}^{\frac{1}{4}} \circ (\z_k-\x), \eta  \sVtori_{k-1}^{\frac{1}{4}} \circ \sdi_k\right\rangle + 2\left\langle  \sVtori_{k-1}^{\frac{1}{4}} \circ (\z_k- \x_k), \eta  \sVtori_{k-1}^{\frac{1}{4}} \circ \sdi_k\right\rangle
\\
& = \left\|\sVtori_{k-1}^{\frac{1}{4}} \circ (\x_{k-1}-\x)\right\|^2-\left\|\sVtori_{k-1}^{\frac{1}{4}} \circ (\x_{k-1}-\z_k)\right\|^2-\left\|\sVtori_{k-1}^{\frac{1}{4}} \circ (\z_k-\x_k)\right\|^2 \\
& + 2\left\langle \sVtori_{k-1}^{\frac{1}{4}} \circ (\x-\z_k), \eta  \sVtori_{k-1}^{\frac{1}{4}} \circ \sdi_k \right\rangle  + 2\left\langle \sVtori_{k-1}^{\frac{1}{4}} \circ (\x_k-\z_k),\sVtori_{k-1}^{\frac{1}{4}} \circ (\x_{k-1} -\z_k)\right\rangle 
\\ & +  2\left\langle \sVtori_{k-1}^{\frac{1}{4}} \circ (\x_k-\z_k),-\eta \sVtori_{k-1}^{\frac{1}{4}} \circ \sdi_k \right\rangle,\end{align*}
where the second equality follows since $\x_{k-1} - \x_{k} - \eta \sdi_k=0$.
 
Now, substituting $\x =\x_*$ into the above equality and rearranging the terms, we get  
\begin{align} \label{eq:R2k00}
\nonumber  & \quad \left\|\sVtori_{k-1}^{\frac{1}{4}} \circ (\z_k-\x_k)\right\|^2 + \left\|\sVtori_{k-1}^{\frac{1}{4}} \circ (\x_{k-1}-\z_k)\right\|^2  \\
\nonumber
&= \underbrace{ \left\|\sVtori_{k-1}^{\frac{1}{4}} \circ (\x_{k-1}-\x_*)\right\|^2 - \left\|\sVtori_{k-1}^{\frac{1}{4}} \circ (\x_{k}-\x_*)\right\|^2}_{ R_{2,0,k} }\\
\nonumber
&+2 \eta \underbrace{\left\langle \sVtori_{k-1}^{\frac{1}{4}} \circ (\x_*-\z_k), \sVtori_{k-1}^{\frac{1}{4}} \circ \sdi_k \right\rangle}_{R_{2,1,k}}\\
&+2 \eta \underbrace{\left\langle \sVtori_{k-1}^{\frac{1}{4}} \circ (\x_k-\z_k),  \sVtori_{k-1}^{\frac{1}{4}} \circ  (\sdi_{k-1} - \sdi_k)\right\rangle}_{R_{2,2,k}}.   
\end{align}
Since by our assumption $ \|\sVtori_{0}^{\frac{1}{2}}\|_{\infty} \geq G_0$, we have 
\begin{align*}%\label{eq:add0}
\nonumber G_0 \|\z_k - \x_k\|^2 &\leq \left\|\sVtori_{k-1}^{\frac{1}{4}} \circ (\z_k-\x_k)\right\|^2, \quad \text{and} \\
 G_0 \|\x_{k-1}-\z_k\|^2 &\leq \left\|\sVtori_{k-1}^{\frac{1}{4}} \circ (\x_{k-1}-\z_k)\right\|^2.
\end{align*}
We substitute the above lower bounds into \eqref{eq:R2k00} to get
\begin{align} \label{eq:R2k_0}
\left\|\z_{k} - \x_{k} \right\|^2 +  \left\|\x_{k-1} -  \z_{k} \right\|^2  & \leq  \frac{R_{2,0,k}}{G_0} + \frac{2 \eta}{G_0}\left(R
_{2,1,k} + R_{2,2,k}\right).   
\end{align}
Next, we provide upper bounds for the terms $R_{2,1,k}$ and $R_{2,2,k}$.
\vspace{0.5cm}
\\
\textbf{Bounding $R_{2,1,k}$.} It follows from the update rule of $\sdi_k$ in~\eqref{eq:update_D0} that 
\begin{align}\label{eq:ddd0}
\nonumber
\sdi_k &= \sdi_k - (1-\beta_{1,k})\sVtorip_{k-1}\circ \seg_{k}  +(1-\beta_{1,k})\sVtorip_{k-1}\circ \seg_{k}
\\
\nonumber
& = \beta_{1,k}\sVtorip_k\circ \m_{k-1} + (1-\beta_{1,k}) (\sVtorip_k-\sVtorip_{k-1})\circ \seg_{k} \\
&+ (1-\beta_{1,k})\sVtorip_{k-1}\circ \srg_k + (1-\beta_{1,k}) \sVtorip_{k-1}\circ (\seg_{k}- \srg_{k}). 
\end{align}
To find an upper bound for $R_{2,1,k}$, we first multiply each term in \eqref{eq:ddd0} by $\sVtori_{k-1}^{\frac{1}{4}}$ and then provide an upper bound for its inner product with $\sVtori_{k-1}^{\frac{1}{4}} \circ (\x_*-\z_k)$. From Lemmas~\ref{lem:useful_bounds},~\ref{VM_bound} and Assumption~\ref{assumption:bounded-space}, we get 
\begin{subequations}
\begin{align}\label{eq:bbb10}
\left\langle \sVtori_{k-1}^{\frac{1}{4}}\circ (\x_*-\z_k), \sVtori_{k-1}^{\frac{1}{4}}\circ \sVtorip_k\circ \m_{k-1} \right\rangle  & \leq  D G_{\infty}  \left\|\sVtorip_k\circ \m_{k-1}\right\| \leq D G_{\infty} \sqrt{du_c^{-1}}.
\end{align} 
Further,
\begin{align}\label{eq:bbb30}
\nonumber
\left\langle \sVtori_{k-1}^{\frac{1}{4}}\circ (\x_*-\z_k),\sVtori_{k-1}^{\frac{1}{4}}\circ (\sVtorip_k-\sVtorip_{k-1})\circ \seg_k \right\rangle & \leq  D G_{\infty}\| (\sVtorip_k-\sVtorip_{k-1})\circ \seg_k \| \\
\nonumber
&\leq   D G_{\infty} \|\seg_k\|_{\infty}  \|\sVtorip_k-\sVtorip_{k-1} \|_{1} \\
&\leq   D G_{\infty}^2  \|\sVtorip_k-\sVtorip_{k-1}\|_{1}, 
\end{align}
where the second inequality is obtained from Remark~\ref{lem:hadamard} and the last inequality is due to Assumption~\ref{assumption:function}~(\ref{assumption:g_bounded}). From Assumption~\ref{assumption:minty}, we have 
\begin{align}\label{eq:bbb00}
\left\langle \sVtori_{k-1}^{\frac{1}{4}}\circ (\x_*-\z_k), \sVtori_{k-1}^{-\frac{1}{4}}\circ \srg_k \right\rangle  
= \left\langle \x_*-\z_{k}, \srg_{k}\right\rangle \leq 0. 
\end{align}
Further,
\begin{align}\label{eq:aaa0}
\left\langle \sVtori_{k-1}^{\frac{1}{4}} \circ(\x_*-\z_k), \sVtori_{k-1}^{-\frac{1}{4}}\circ (\seg_{k}- \srg_{k})\right\rangle &= \left\langle\x_*-\z_k,  \seg_{k}- \srg_{k}\right\rangle =: \Theta_k .
\end{align}
\end{subequations}

Now, using~\eqref{eq:aaa0}--\eqref{eq:bbb30}, we obtain
\begin{align}\label{eq:N_update0}
 R_{2,1,k} & \leq \beta_{1,k}  D G_{\infty}\sqrt{du_c^{-1}}  +  D G_{\infty}^2  \|\sVtorip_k-\sVtorip_{k-1} \|_{1} +\Theta_k.
\end{align}

\textbf{Bounding} $R_{2,2,k}$
From the update rule of $\sdi_k$ in~\eqref{eq:update_D0}, we get 
\begin{align}\label{eq:ddif0}
\nonumber
\sdi_{k}-\sdi_{k-1} &= \beta_{1,k}  \sVtorip_k\circ \m_{k-1} +  (1-\beta_{1,k}) \sVtorip_k \circ \seg_k\\
\nonumber
    &-\beta_{1,k-1}  \sVtorip_{k-1}\circ \m_{k-2} - (1-\beta_{1,k-1}) \sVtorip_{k-1} \circ \seg_{k-1}\\
    \nonumber
    & =\beta_{1,k} \sVtorip_k\circ \m_{k-1}-\beta_{1,k-1}\sVtorip_{k-1}\circ \m_{k-2}\\
    \nonumber
    & + (1-\beta_{1,k}) (\sVtorip_k - \sVtorip_{k-1} + \sVtorip_{k-1})\circ \seg_{k}-(1-\beta_{1,k-1})\sVtorip_{k-1} \circ \seg_{k-1}\\
    \nonumber
    & = \beta_{1,k} \sVtorip_k\circ \m_{k-1} - \beta_{1,k-1} \sVtorip_{k-1}\circ \m_{k-2} + (1-\beta_{1,k}) (\sVtorip_k - \sVtorip_{k-1}) \circ \seg_{k}\\
    \nonumber
    &+ (1-\beta_{1,k})\sVtorip_{k-1}\circ (\srg_{k} +\beps_k ) -(1-\beta_{1,k-1})\sVtorip_{k-1} \circ ( \srg_{k-1}+\beps_{k-1} ) 
    \\ \nonumber 
     & = \beta_{1,k} \sVtorip_k\circ \m_{k-1} - \beta_{1,k-1} \sVtorip_{k-1}\circ \m_{k-2} + (1-\beta_{1,k}) (\sVtorip_k - \sVtorip_{k-1}) \circ \seg_{k}
     \\ \nonumber
    &+ (1-\beta_{1,k})\sVtorip_{k-1}\circ \beps_k - (1-\beta_{1,k-1})\sVtorip_{k-1} \circ \beps_{k-1} \\
    &+ (1-\beta_{1,k-1}) \sVtorip_{k-1} \circ  (\srg_{k} - \srg_{k-1})
     + (\beta_{1,k-1} - \beta_{1,k}) \sVtorip_{k-1} \circ \srg_{k}.
\end{align}
Next, we focus on providing upper bounds for $$R_{2,2,k}=\eta \|\sVtori_{k-1}^{\frac{1}{4}} \circ (\sdi_{k}-\sdi_{k-1})\|^2 \leq \eta G_{\infty} \|\sdi_{k}-\sdi_{k-1}\|^2 .$$
Observe that 
\begin{subequations}
\begin{align}\label{eq:rr2200}
\nonumber 
&\qquad  \left\|\beta_{1,k} \sVtorip_k\circ \m_{k-1}\right\|^2 + \left\|- \beta_{1,k-1} \sVtorip_{k-1}\circ \m_{k-2}\right\|^2 \\
& \leq 2 \max\left(\left\|\beta_{1,k} \sVtorip_k\circ \m_{k-1}\right\|^2 , \left\|- \beta_{1,k-1} \sVtorip_{k-1}\circ \m_{k-2}\right\|^2\right) \leq  \frac{2d\beta^2_{1,k-1}}{u_c},
\end{align} 
where the inequality follows from Lemma~\ref{VM_bound}. Using Remark~\ref{lem:hadamard}, we get
\begin{align}\label{eq:rr2220} 
\left\| (1-\beta_{1,k}) (\sVtorip_k - \sVtorip_{k-1}) \circ \seg_{k}\right \|^2 & \leq \|\seg_k\|_{\infty}^2  \|\sVtorip_k-\sVtorip_{k-1} \|_{1}^2 \leq G_{\infty}^2  \|\sVtorip_k-\sVtorip_{k-1} \|_{1}^2,
\end{align} 
where the last inequality uses Assumption~\ref{assumption:function}~(\ref{assumption:g_bounded}). Similarly,  
\begin{align}\label{eq:rr2240}
\nonumber 
\left\|(1-\beta_{1,k-1})\sVtorip_{k-1}\circ (\srg_{k-1}-\srg_{k})\right\|^2  & \leq \|\sVtorip_{k-1}\|_{\infty}^2 \|\srg_{k-1}-\srg_{k} \|^2  
\\
&\leq \frac{L^2}{G_0^2}  \| \z_{k-1} - \z_{k} \|^2, \\
\nonumber 
\|(\beta_{1,k}-\beta_{1,k-1})\sVtorip_{k-1}\circ \srg_{k}\|^2  & \leq  (\beta_{1,k}-\beta_{1,k-1})^2 \|\sVtorip_{k-1}\|_{\infty}^2 \left\|\srg_{k}\right\|^2 \\
&\leq \frac{(\beta_{1,k}-\beta_{1,k-1})^2 }{G_0^2}\|\srg_{k}\|^2 \label{eq:rr2260}.
\end{align} 
\end{subequations}
By taking the norm of \eqref{eq:ddif0}, using Remark~\ref{lem:hadamard} and~\eqref{eq:rr2200}--\eqref{eq:rr2260}, we get 
\begin{align}\label{eq:R2khalh0}
\nonumber \frac{R_{2,2,k}}{G_{\infty}}\leq \eta  \left\|\sdi_{k}-\sdi_{k-1} \right\|^2 &\leq 14 \eta d\beta^2_{1,k-1} u_c^{-1}+ 7 \eta G_{\infty}^2  \|\sVtorip_k-\sVtorip_{k-1} \|_{1}^2  \\
\nonumber
 & +  \frac{7 \eta L^2}{G_0^2}  \left(  \| \z_{k-1} - \z_{k} \|^2 \right) \\
 \nonumber
 & +  \frac{7 \eta}{G_0^2}  (\beta_{1,k}-\beta_{1,k-1})^2 \|\srg_{k}\|^2\\
  & +  \frac{7 \eta}{G_0^2} \left(\|\beps_{k}\|^2+ \|\beps_{k-1}\|^2\right).
\end{align}
By substituting~\eqref{eq:N_update0} and \eqref{eq:R2khalh0} into~\eqref{eq:R2k_0}, we obtain 
\begin{align*}%\label{eq:sumd00}
\nonumber
& \quad \left\|\z_k-\x_k\right\|^2 + \left\|\x_{k-1}-\z_k\right\|^2 \leq  \frac{R_{2,0,k}}{G_{0}} 
\\ \nonumber
&+ \frac{2 \eta }{G_{0}} \left(\beta_{1,k}DG_{\infty}\sqrt{du_c^{-1}} + DG_{\infty}^2 \|\sVtorip_k-\sVtorip_{k-1} \|_{1} + \Theta_{k}\right) \\
\nonumber 
& + \frac{14 \eta^2 G_{\infty}}{G_{0}}  \left(2d\beta^2_{1,k-1}u_c^{-1} +  G_{\infty}^2  \|\sVtorip_k-\sVtorip_{k-1} \|_{1}^2\right)  \\
\nonumber
 & +  \frac{14 \eta^2 G_{\infty}}{G_0^3} \left( L^2  \|\z_k - \z_{k-1}\|^2+ (\beta_{1,k}-\beta_{1,k-1})^2 \|\srg_{k}\|^2\right)\\
  & +  \frac{14 \eta^2 G_{\infty}}{G_0^3} \left(\|\beps_{k}\|^2+ \|\beps_{k-1}\|^2\right). 
\end{align*}
Now, summing the above inequality over $k$,  we obtain
\begin{align}\label{eq:rd22}
\nonumber 
& \quad  \left(1- \frac{28\eta^2 L^2 G_{\infty}}{G_0^3}\right)  \sum_{k=1}^{N} \left\|\x_{k-1}-\z_k\right\|^2 +  \left(1- \frac{28\eta^2 L^2 G_{\infty} }{G_0^3}\right) \sum_{k=1}^{N}\left\|\z_k-\x_k\right\|^2  
\\
\nonumber
& \leq  \frac{3 D^2 dG_{\infty}}{G_0}+ \frac{2 \eta }{G_0} \bigg(\frac{D\beta_{1,1} G_{\infty}}{1-\kappa} \sqrt{\frac{d}{u_c}}+ \frac{DG^2_{\infty}d}{G_0} + \sum_{k=1}^N \Theta_k\bigg) 
\\ 
\nonumber
& + \frac{28 \eta^2 d \beta_{1,1}^2 G_{\infty}}{u_c(1-\kappa^2)G_{0}} +  \frac{14 \eta^2d G_{\infty}^3}{G_0^3} \\
& + \frac{14\eta^2G_{\infty}}{G_0^3}  \sum_{k=1}^{N} (\beta_{1,k} - \beta_{1,k-1})^2 \|\srg_k\|^2 + \frac{28 \eta^2 G_{\infty}}{G_{0}^3} \sum_{k = 1}^N \|\beps_{k}\|^2 = :\text{R.H.S.}.
\end{align}
Here, we used Lemma~\ref{lem:v_diff} and  the fact that 
\begin{align}\label{sumd10} 
\nonumber 
 \sum_{k=1}^{N} \|\z_k - \z_{k-1}\|^2
& \leq  2 \sum_{k=1}^{N}\|\z_k - \x_{k-1}\|^2+  2 \sum_{k=1}^{N} \|\x_{k-1}- \z_{k-1}\|^2\\
& = 2 \sum_{k=1}^{N}\|\z_k - \x_{k-1}\|^2+  2 \sum_{k=1}^{N} \|\x_{k}- \z_{k}\|^2, 
\end{align}
where the inequality follows from Remark~\ref{lem:hadamard} and the equality uses our assumption $\x_0= \z_0=0$.  

Now, by our choice of step size $\eta$ in the beginning of Step~3, we have $ 1-(28\eta^2 L^2 G_{\infty})/G_0^3 \geq 1/2.$ Thus, \eqref{eq:r120} together with \eqref{eq:rd22} implies that 
\begin{align}%\label{eq:rd20}
\nonumber 
 \frac{1}{N}\sum_{k=1}^{N} R_{2,k} &= \sum_{k=1}^{N} \left(\left\|\x_{k-1}-\z_k\right\|^2 + \left\|\z_k-\x_k\right\|^2 \right)  \leq \frac{2}{N} \text{R.H.S.},  
\end{align}
which gives \eqref{eq:stp2:result}.

\item[\textbf{Step 4}] (Convergence Analysis) In this step, we combine the results from the previous steps to establish an error bound for $N^{-1}\sum_{k = 1}^N \E\|\srg_k\|^2$. To do so, by substituting  \eqref{eq:stp2:result} and \eqref{R1k0} into \eqref{eq:thm_main0} and simplifying the terms, we obtain    
\begin{align}\label{eq:tabb}
\nonumber
\eta^2 (1-\beta_{1,1})^2 G_{\infty}^{-2}\frac{1}{N}\sum_{k = 1}^N \E\|\srg_k\|^2 & \leq  \frac{ 6 \eta^2 d \beta_{1,1}^2}{Nu_c (1-\kappa^2)} + \frac{6\eta^2 \sigma^2}{m G_0^2}  + \frac{18 D^2 dG_{\infty}}{NG_0}\\
\nonumber
&+ \frac{12 \eta D}{NG_0} \bigg(\frac{\beta_{1,1} G_{\infty}}{1-\kappa} \sqrt{\frac{d}{u_c}}+ \frac{G^2_{\infty}d}{G_0}\bigg) 
\\ 
\nonumber
& + \frac{168 \eta^2 d \beta_{1,1}^2 G_{\infty}}{Nu_c(1-\kappa^2)G_{0}} +  \frac{84 \eta^2d G_{\infty}^3}{NG_0^3} \\
& + \frac{1}{N}\sum_{i = 1}^N\frac{84 (1-\kappa) \beta_{1,1}^2 \eta^2G_{\infty}  }{G_0^3 \kappa^2(1+\kappa) }\E\|\srg_k\|^2 + \frac{168 \eta^2 G_{\infty}\sigma^2}{G_{0}^3m}.
\end{align}
Here, we used the fact that 
$$ \E \left[\sum_{k=1}^N \Theta_k\right] =0, \quad \text{and}  \quad \E \left[\sum_{k=1}^N \|\epsilon_k\|^2\right] =\frac{\sigma^2}{m},$$ by Assumptions~\ref{assumption:g_unbi} and~\ref{assu:bounded_var}, respectively.  

Now, it follows from~\eqref{eq:tabb}  that  
\begin{align*}
&\frac{B_0}{N} \sum_{k = 1}^N \E\|\srg_k\|^2  \leq \frac{B_1}{N} + \frac{B_2 \sigma^2 }{m},
\end{align*}
where 
\begin{align}\label{eq:const0}
\nonumber
& B_0 :=  \eta^2 (1-\beta_{1,1})^2 G_{\infty}^{-2}- \frac{84 (1-\kappa) \beta_{1,1}^2 \eta^2G_{\infty}}{G_0^3 \kappa^2(1+\kappa) },
\\
\nonumber 
& B_1:=    \frac{ 6 \eta^2 d \beta_{1,1}^2}{u_c (1-\kappa^2)} +\frac{18 D^2 dG_{\infty}}{G_0} + \frac{168 \eta^2 d \beta_{1,1}^2 G_{\infty}}{u_c(1-\kappa^2)G_{0}}
\\ 
\nonumber
& \qquad  + \frac{84 \eta^2d G_{\infty}^3}{G_0^3} + \frac{12 \eta D}{G_0} \bigg(\frac{\beta_{1,1} G_{\infty}}{1-\kappa} \sqrt{\frac{d}{u_c}}+ \frac{G^2_{\infty}d}{G_0}\bigg), 
\\ 
&  B_2:= \frac{6 \eta^2}{{G_0^2}} + \frac{168 \eta^2 G_{\infty}}{G_{0}^3}.
\end{align}

Next, define $C = \frac{(1+\kappa)\kappa^2 G_0^3}{168(1-\kappa) G^3_{\infty}}$ and pick $\beta_{1,1}\leq \frac{\sqrt{C}}{\sqrt{C} + 1}$. Then, dividing both sides by $B_0$ gives us the desired result. 
\end{enumerate}
\end{proof}

\begin{figure}%
    \centering
    \subfloat[\centering \ADAM]{{\includegraphics[width=5cm]{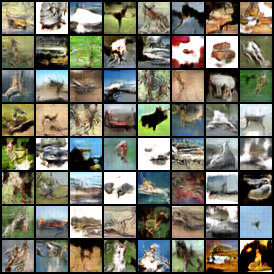} }}%
    \qquad
    \subfloat[\centering OAdagrad]{{\includegraphics[width=5cm]{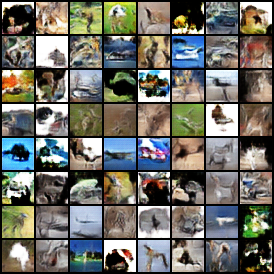} }}%
    \caption{Generated CIFAR-10 Samples}%
    \label{fig:sample}%
\end{figure}

% \begin{figure}[H]

% \begin{subfigure}{.5\textwidth}
%   \centering
%   \includegraphics[width=.8\linewidth]{images/cifar10_adam3.png}
%   \caption{\ADAM}
%   \label{fig:cifar10_adam3}
% \end{subfigure}%
% \begin{subfigure}{.5\textwidth}
%   \centering
%   \includegraphics[width=.8\linewidth]{images/cifar10_oada.png}
%   \caption{OAdagrad}
%   \label{fig:cifar1_oada}
% \end{subfigure}%

% \caption{Generated CelebA (top row) and CIFAR-10 (bottom row) Samples for Single Computing Node Experiment}
% \label{fig:sample_celeba_cifar}
% \end{figure}

% \textbf{Corollary~\ref{cor:1}.}~\textit{Under the Assumptions in Theorem~\ref{thm:main}, if $N \geq $ $\{N,m\} \geq C \epsilon^{-2})$, then there exists an iterate that is an $\epsilon$-SFNE point of Game~\eqref{eq:main_game}.}
% \begin{proof}
% In the proof of Theorem~\ref{thm:main} it was shown that
% \begin{align*}
% \frac{1}{N}\sum_{k = 1}^N \E \|\nabla F (\z_k)\|^2 \leq  \frac{C_1}{N}+ \frac{ C_2\sigma^2 }{m}    
% \end{align*}
% for some constants $C_1$ and $C_2$. Next, select $N \geq 3 C_1 \epsilon^{-2}$ and $m \geq 3 C_2 \sigma^2 \epsilon^{-2}$, and then at least one of the iterates $k \in \{1,\cdots,N\}$ is an $\epsilon$-SFNE point.
% \end{proof}
\end{document}